\DeclareMathOperator*{\argmin}{arg\,min}
\theoremstyle{plain}
\newtheorem{problem}{Problem}
\newtheorem{theorem}{Theorem}
\newtheorem{lemma}{Lemma}
\newtheorem{assumption}{Assumption}
\newcommand{\ubar}[1]{\underaccent{\bar}{#1}}
\newcommand{\diag}{\text{diag}}
\newcommand{\R}{\mathbb{R}}
\newcommand{\V}{\mathcal{V}}
\newcommand{\G}{\mathcal{G}}
\newcommand{\I}{\mathcal{I}}
\newcommand{\J}{\mathcal{J}}
\newcommand{\lhs}{\partial C}
\newcommand{\param}{\theta^0}
\title{Decentralized Implicit Differentiation}
\author{
Lucas Fuentes Valenzuela,
Robin Brown,
Marco Pavone
}
\begin{document}

\maketitle
\begin{abstract}
The ability to differentiate through optimization problems has unlocked numerous applications, from optimization-based layers in machine learning models to complex design problems formulated as bilevel programs.
It has been shown that exploiting problem structure can yield significant computation gains for optimization and, in some cases, enable distributed computation. 
One should expect that this structure can be similarly exploited for gradient computation.
In this work, we discuss a decentralized framework for computing gradients of constraint-coupled optimization problems.
First, we show that this framework results in significant computational gains, especially for large systems, and provide sufficient conditions for its validity.
Second, we leverage exponential decay of sensitivities in graph-structured problems towards building a fully distributed algorithm with convergence guarantees. 
Finally, we use the methodology to rigorously estimate marginal emissions rates in power systems models. 
Specifically, we demonstrate how the distributed scheme allows for accurate and efficient estimation of these important emissions metrics on large dynamic power system models.
\end{abstract}
\section{Introduction}\label{sec:intro}

The possibility of computing gradients of arbitrary convex optimization programs with respect to their parameters has recently opened the door to many applications. Beyond sensitivity analysis, it allows for gradient-based optimization of an outer objective in bi-level optimization problems, e.g. in hyperparameter optimization~\cite{Bengio2000-fl} and meta-learning~\cite{Franceschi2018-iu}. The representation power of optimization problems has even led researchers to use them as models~\cite{Agrawal2021-vl}, trained via common gradient-based techniques. 

In many cases, e.g. in dynamic programs or network-flow problems, the optimization program can be interpreted as multiple subproblems interconnected through coupling constraints. This structure can be leveraged to design distributed solution methods~\cite{Boyd2011-xn}, which are relevant for enabling parallel computation and reducing the computational burden on individual cores. They are also necessary in contexts where data is distributed and cannot be centralized for privacy reasons as is the case in the federated learning setting~\cite{Kairouz2021-fy}. One should expect that differentiation of a structured optimization program benefits from similar potential for decentralized computation; however, this remains largely unexplored. 
In this paper, we propose an efficient framework to compute gradients of separable optimization problems with arbitrary coupling constraints.

\subsection{Related Work}

\paragraph{Differentiable Optimization} Differentiating through an optimization program is usually achieved in two main ways. One method consists of backpropagating through the iteration sequence in an optimization program~\cite{Domke2012-fx}, and is commonly referred to as \textit{unrolling} or \textit{iterative differentiation}. 
It has been used to estimate Stein's Unbiased Risk Estimator for large problem instances~\cite{Nobel2022-ww} and to compute hyperparameter gradients in hyperparameter optimization~\cite{Maclaurin2015-kt,Franceschi2017-ub}, among others.
This approach has the advantage of not requiring an exact solution and permits differentiation at any stage in the solution procedure.
While it is readily compatible with automatic differentiation frameworks, it requires the solver to be differentiable. 
Moreover, storing all computation can be expensive for large numbers of iterations; to address this limitation, truncation methods have been proposed~\cite{Shaban2019-hu}.

In the second approach, gradients of the solution are computed via \textit{implicit differentiation}. While obtaining a closed-form solution is typically not possible, one can represent it implicitly via optimality conditions such as stationarity conditions for unconstrained problems~\cite{Bengio2000-fl}, KKT conditions for Quadratic Programs~\cite{Amos2017-ps,Barratt2018-yk} or self-dual homogeneous embeddings for generic cone programs~\cite{Ye1994-af,Agrawal2019-cr}. This method is attractive because it is agnostic to the solution procedure and is therefore compatible with any type of solver, even those that are not differentiable. 

The flexibility of implicit differentiation has enabled the learning of convex optimization programs representing predictive models~\cite{Agrawal2021-vl} or control policies~\cite{Agrawal2020-tw}. 
It has also allowed the integration of convex optimization programs~\cite{Amos2017-ps,Agrawal2019-fu}, MILPs~\cite{Wilder2019-tl,Ferber2020-dp}, physics simulators~\cite{De_Avila_Belbute-Peres2018-tj} or MPC control policies~\cite{Amos2018-nu} as layers in neural networks, potentially enriching the representational power of these models. Other applications include gradient computation in meta-learning~\cite{Rajeswaran2019-at}, automatic hyperparameter optimization in data-fitting problems~\cite{Barratt2020-wq} and solution refinement in conic solvers~\cite{Busseti2019-gg}.

This approach usually requires an optimal solution and solving a linear system, which can become prohibitive for very large problems. \textit{Approximate} Implicit Differentiation (AID) methods have been developed to deal with this limitation~\cite{Grazzi2020-dg} by approximating the solution to the linear problem via truncated conjugate gradients~\cite{Pedregosa2016-si} or Neumann series~\cite{Lorraine2020-xs}. Recent implementations leverage automatic differentiation to compute gradients through user-defined optimality conditions, thereby allowing the use of any specialized solver of interest~\cite{Blondel2022-tr}.

\vspace{10pt}
\paragraph{Decentralized Differentiation}
The development of distributed optimization has been motivated by the distributed nature of problem data and parameters in many contexts, e.g. in multi-agent systems or in machine learning settings where privacy is a concern.
If those types of problems are to benefit from the numerous applications above, gradient computation needs to be performed in a way that preserves this distributed structure.

Interestingly, only a few recent studies have tackled decentralized gradient computation, usually motivated by bilevel optimization in machine learning settings.
A bilevel program~\cite{Sinha2018-nh} typically consists of two nested optimization problems: the \textit{outer}-level problem depends on a variable which is a solution to the \textit{lower}-level problem. The gradient-based methods common in bilevel optimization require computation of the \textit{hypergradient} (i.e. the gradient of the outer-level problem). When the lower-level problems are unconstrained, which is often the case in machine learning applications, computing the hypergradient necessitates inverting the full Hessian of the objective function, which can be both computationally costly for large-scale problems and difficult to obtain in fully distributed settings. Gradient-tracking methods~\cite{Chen2022-wz, Chen2023-vn, Pu2021-tg} or truncated Neumann series~\cite{Ghadimi2018-jr, Yang2022-rc, Gao2023-yf} are recent suggestions to circumvent those limitations. However, these studies usually consider a restricted set of lower-level problems, where they all share the same optimization variables and are unconstrained. 


\vspace{10pt}
\paragraph{Locality}
Many decision-making problems have a graph as underlying structure. For instance, in dynamic optimization (e.g. optimal control), the graph is linear and links two consecutive timesteps together. In multi-stage stochastic programming, the graph corresponds to a scenario tree, while in network optimization the graph represents a physical network.
This problem structure can be actively exploited for efficient solution methods. Indeed, in \textit{graph-structured} problems, the sensitivity of a subproblem with respect to another subproblem's parameters decreases as the distance on the graph between those problems increases. Rebeschini and Tatikonda~\cite{Rebeschini2019-td} proposed a notion of correlation between variables in network optimization and show that this metric decays exponentially as a function of the graph distance on the network. This so-called \textit{locality} has been used to develop efficient solution methods for multi-agent optimization problems~\cite{Brown2020-ad, Brown2021-ju}. While these works focus on a particular definition of correlation, Shin \textit{et al.} proved that \textit{exponential decay in sensitivities} is a general feature of graph-structured optimization problems~\cite{Shin2022-jo} and developed decentralized solution schemes that rely on this property~\cite{Shin2020-jc}.

\subsection{Statement of Contributions and Organization}


In this paper, we develop a decentralized framework for differentiating through separable optimization problems with arbitrary coupling constraints.
The approach provides a significant computational advantage, particularly when the number of coupling constraints is small relative to the size of the entire problem.
We extend these results further in the case of graph-structured constraints. By leveraging exponential decay of sensitivities, we enable fully distributed differentiation with convergence guarantees.

This document is structured as follows. In Section~\ref{sec:DID}, we derive the differentiation framework, analyze its computational complexity and provide sufficient conditions for the validity of the approach. 
In Section~\ref{sec:distributed}, we generalize the analysis of existing distributed schemes~\cite{Shin2020-jc} and demonstrate that these algorithms can be applied, with convergence guarantees, to our problem setting. In Section~\ref{sec:experiments}, we validate the computational gains and the fully decentralized scheme via simulation on randomly generated problems. In Section~\ref{sec:MEF_approx}, we apply the fully decentralized scheme to the computation of marginal emissions in power systems models.
We conclude in Section~\ref{sec:conclusion}.


\subsection{Notation}
We denote by $e_i$ the canonical $i$th basis vector and $I_n$ the identity matrix of dimensions $n \times n$. For a set of indices $\mathcal{I} = \{i_1, ..., i_N\}$, we define $\{v_i\}_{i \in \mathcal{I}} = [v_{i_1},..., v_{i_N}] $.
Consider a multivariate function $F(x) : \R^M \rightarrow \R^N$.
We let $DF(x) \in \R^{N \times M}$ denote the Jacobian matrix of $F$ evaluated at $x$; the entry $DF_{nm}(x)$ is the derivative of $F_n$ with respect to $x_m$.
For functions with multiple arguments $F(x, y) : \R^N \times \R^L \rightarrow \R^L$, we use $\partial_x F(x, y) \in \R^{L \times N}$ and $\partial_y F(x, y) \in \R^{L \times L}$ to denote the partial Jacobians of $F$ with respect to its first and second arguments, respectively.

\section{Decentralized Implicit Differentiation}\label{sec:DID}

In this section, we lay out the generic differentiation framework proposed in this paper, discuss the associated computational gains and provide conditions for the validity of the approach.

\subsection{Problem Statement}\label{sec:problem_statement}

We consider \textit{constraint-coupled problems} consisting of $N$ subproblems coupled via both equality and inequality constraints~\cite{Falsone2020-pr, Su2022-kl}. We will assume the coupling constraints are themselves separable, i.e. they can be written as a sum where each term depends on a single subproblem. This is the case for any affine constraint. 
\begin{problem}[Separable constraint-coupled problem]
\begin{align*}
\begin{array}{ll}
    \min_{\mathrm{x}} & \sum_{i=1}^N f_i^0(x_i, \theta_i),\\
    \textrm{s.t. }& x_i \in \mathcal{C}_i(\theta_i), \ i = 1, ...,\ N, \\ 
&  \sum_{i=1}^N H_i(x_i, \theta_c) = 0,\\
   & \sum_{i=1}^N F_i(x_i, \theta_c) \leq 0.
\end{array}
\end{align*}
\label{pbm:CCOP}
\end{problem}
\vspace{-10pt}
We will refer to this generic problem as the \textit{global} or \textit{centralized} problem.
The $N$ local convex optimization problems, indexed by $i \in \{1, \ldots N\}$, are defined by the local variables, $x_i \in \mathbbm{R}^{n_i}$, objectives $f_i^0: \mathbbm{R}^{n_i} \times \mathbbm{R}^{t_i} \to \mathbbm{R}$, and constraint sets $\mathcal{C}_i(\theta_i) \subseteq \mathbbm{R}^{l_i}$.
Both the objectives, $f_i^0(\cdot, \theta_i)$, and constraint sets, $\mathcal{C}_i(\theta_i)$, depend on local parameters $\theta_i \in \mathbbm{R}^{t_i}$ and are specific to each subproblem.
All $N$ subproblems are jointly constrained by $\Lambda_H$ coupling equality constraints, $H_i(x_i, \theta_c) : \R^{n_i} \times \R^{t_c} \to \R^{\Lambda_H}$, and $\Lambda_F$ coupling inequality constraints, $F_i(x_i, \theta_c) : \R^{n_i} \times \R^{t_c} \to \R^{\Lambda_F}$,
where $\theta_{c} \in \mathbbm{R}^{t_c}$ parameterizes those coupling constraints. Finally, we will denote the vector of all primal variables by $\mathrm{x} = [x_1^T, ..., x_N^T]^T$ and all parameters  by $\theta = [\theta_1^T, ..., \theta_N^T, \theta_{c}^T]^T $.

The aim of this paper is to develop a method that leverages the inherent structure in the above problem to compute the Jacobian $D_{\theta}x_i^\star$, for all $i$. 
Associating dual variables $\nu \in \mathbbm{R}^{\Lambda_H}$ and $\lambda \in \mathbbm{R}^{\Lambda_F}$ to the equality and inequality coupling constraints, respectively, we will assume we have a solution $(\mathrm{x}^\star(\param), \nu^\star(\param), \lambda^\star(\param))$ to the optimization problem for parameters $\param$ via any appropriate solution method. The problem structure is particularly amenable to distributed algorithms ~\cite{Falsone2020-pr}.
We also make the following technical assumptions.

\begin{assumption}[Differentiability]
The functions defining the problem are twice continuously differentiable with respect to both $\mathrm{x}$ and the parameters $\theta$ in a neighborhood of $\mathrm{x}^\star(\param)$.
\label{A1}\end{assumption}

\begin{assumption}[First order conditions] The solution $(\mathrm{x}^\star(\param), \nu^\star(\param), \lambda^\star(\param))$ satisfies the first order KKT conditions.
\label{A2}\end{assumption}

\begin{assumption}[Second order conditions]
The solution to Problem~\ref{pbm:CCOP} satisfies the second order optimality conditions $u^T \nabla^2 L(\mathrm{x}^\star, \lambda^\star, \nu^\star, \param) u > 0, \ \forall u \neq 0$ s.t.
\begin{align*}
& u^T \nabla g_i(\mathrm{x}^\star, \param) \leq 0, \text{ for all } i \text{ where } g_i(\mathrm{x}^\star, \param) = 0, \\ 
& u^T \nabla g_i(\mathrm{x}^\star, \param) = 0, \text{ for all } i \text{ where } \lambda^\star_i > 0, \\
& u^T \nabla h_j(\mathrm{x}^\star, \param) = 0,\  j = 1, ..., m,
\end{align*}
where $g_i\ (i = 1, ..., n)$ and $h_j\ (j=1, ..., m)$ represent \emph{all} the inequality and equality constraints of the global problem---including the local constraints--- respectively, and $L$ denotes the Lagrangian.
\label{A3}\end{assumption}
Assumption~\ref{A3} is always satisfied if the problem is strictly convex. However, for linear programs (LPs), it requires that the solution $\mathrm{x}^\star$ lie on a vertex of the constraint set, and that all the binding constraints be linearly independent. Those conditions are sufficient to ensure that, locally, a minimizing point is unique.
Note that while the first order KKT conditions are sufficient to ensure that the problem has been solved to optimality, they do not imply the uniqueness of the solution point. 

\begin{assumption}[Linearly independent binding constraints] A solution to the optimization problem is such that the gradients of the binding constraints are linearly independent.
\label{A4}\end{assumption}

\begin{assumption}[Strict complementary slackness] The dual variable associated to inequality constraints is strictly positive for every binding inequality constraint.
\label{A5}
\end{assumption}

\subsection{Differentiation Framework}\label{sec:decentralized_diff}
The main idea in this paper consists of decomposing the Jacobian $D_{\theta}x_i^\star$ in a local component
and a correction term emanating from the coupling between all subproblems. 
The former is obtained by differentiating each subproblem independently, while the latter results from differentiating through the KKT conditions associated with the coupling constraints. Combining both components yields the expected total sensitivity, and accounts for both local and coupling effects across subproblems.

Precisely, we construct the Lagrangian by dualizing only the coupling constraints as
\begin{align*}
    L(\mathrm{x}, \theta, \nu, \lambda)
    = \sum_i^{N} &\left(f_i^0(x_i, \theta_i)\right. \\
    &\left. +\ \nu^TH_i(x_i, \theta_c) + \lambda^T F_i(x_i, \theta_c)\right).
\end{align*}
Given the optimal dual variables, the optimal primal solution $\mathrm{x}^\star$ satisfies the following equation
\begin{equation*}
    \mathrm{x}^\star = \argmin_{x_i \in \mathcal{C}_i(\theta_i),\ i=1,...,N} L(\mathrm{x}, \theta, \nu^\star, \lambda^\star).
\end{equation*}
As the Lagrangian is itself separable, we can optimize each subproblem independently, 
\begin{equation*}
\label{eq:subpb_xi}
    x_i^\star = \argmin_{x_i \in \mathcal{C}_i(\theta_i)} \ f_i^0(x_i, \theta_i) + \nu^{\star T}H_i(x_i, \theta_c) + \lambda^{\star T} F_i(x_i, \theta_c).
\end{equation*}
While subproblem $i$ explicitly involves only the parameters $\theta_i$ and $\theta_c$, it is implicitly dependent on all parameters $\theta$ via the coupling constraints. This dependence can be made explicit by expressing the optimal solution as $x_i^\star = x_i^\star(\theta_i, \theta_c, \nu^\star(\theta), \lambda^\star(\theta))$.
Our key insight is that when the dependence of $\nu,\ \lambda$ on  $\theta$ is ignored, their interpretation as dual variables can be temporarily disregarded.
This allows us to treat them as independent \textit{parameters} in each subproblem.
The local component in decentralized differentiation then consists of computing the Jacobians of local optimal variables $x_i^\star$ with respect to $\theta_i,\ \theta_c,\ \nu^\star$ and $\lambda^\star$. The correction term, on the other hand, comes from applying the implicit function theorem to deduce the sensitivities of $\nu^\star,\ \lambda^\star$ with respect to $\theta$. Finally, the total sensitivities are computed by applying the chain rule.

Specifically, we define an augmented local parameter vector $\bar{\theta}_i = [\theta_i, \theta_c, \nu, \lambda]$ that includes both the original parameters and local copies of the dual variables. This disentangles all subproblems for the purposes of differentiation.
Let $z_i$ denote the vector of local primal and dual variables and $G_i(z_i, \bar{\theta}_i) = 0$ the KKT conditions that $z_i$ must satisfy at optimality~\cite{Barratt2020-wq, Amos2017-ps}.
We can then compute the \textit{local Jacobian} with respect to the augmented local parameters via the implicit function theorem~\cite{Dontchev2009-ep}
\begin{align}\label{eq:local_jac}
    \partial_{\bar{\theta}_i}x_i^\star 
    &=  - \begin{bmatrix}
        I_{n_i} & 0
    \end{bmatrix}\left(\partial_{z_i} G_i \right)^{-1}\partial_{\bar{\theta}_i} G_i.
\end{align}
We note that~\eqref{eq:local_jac} only depends on local information, and thus can be computed in a fully distributed fashion, without any exchange of information between subproblems.

While the above derivations treat $\nu^\star,\ \lambda^\star$ as fixed parameters, in reality they depend on the entire parameter vector $\theta$.
Therefore, a change in any parameter $\theta_j$ of subproblem $j$ may impact the optimal value of any other subproblem $i\neq j$.
Indeed, the subproblems are related through the KKT conditions associated with the coupling constraints: 
\begin{equation}
    C\left(\mathrm{x}^\star(\theta)
    , \lambda^\star(\theta), \theta_c \right) = 
    {\scriptsize
    \begin{bmatrix}
    \sum_iH_i(x_i^\star, \theta_c)\\
    \diag(\lambda^\star)\sum_iF_i(x_i^\star, \theta_c)
    \end{bmatrix}}
    = 0.
\end{equation}
Differentiating this expression with respect to $\theta$ and using $D_\theta C=0$ results in the following linear system:
\begin{equation}
\label{eq:coupling_gradients}
    \begin{array}{rl}
    \lhs  \begin{bmatrix}
        D_\theta \nu \\
        D_\theta \lambda
    \end{bmatrix} 
    &= q, \\
    \lhs 
    &= -\begin{bmatrix}
        \partial_\mathrm{x} C \partial_\nu \mathrm{x}
        & \partial_\mathrm{x} C \partial_\lambda \mathrm{x} + \partial_\lambda C
    \end{bmatrix},\\
    q 
    &= \begin{bmatrix}
        \partial_\mathrm{x} C \partial_\theta \mathrm{x} + \partial_\theta C
    \end{bmatrix}.
    \end{array}
\end{equation}
The above equation expresses that local gradients are coupled across subproblems, similarly to primal variables. We will refer to the variable $y := [D_\theta \nu^T \ D_\theta \lambda^T]^T$ as the \textit{coupling Jacobian}.
Computing the coupling Jacobian therefore requires solving a system of equations involving local gradients from all subproblems.
In a federated setting, a central computing node can aggregate local gradients and solve~\eqref{eq:coupling_gradients}. In Section~\ref{sec:distributed}, we discuss a fully distributed scheme to solve~\eqref{eq:coupling_gradients}.

The separability of Problem~\ref{pbm:CCOP} makes~\eqref{eq:coupling_gradients} decomposable in local terms. Indeed, it is equivalent to
\begin{align}
    \sum_i \lhs_i
    \begin{bmatrix}
        D_\theta \nu\\
        D_\theta \lambda
    \end{bmatrix}
     = 
     \sum_i q_i ,
        \label{eq:coupling_gradients_sum}
\end{align}
where\footnote{We write $H_i = H_i(x_i^\star, \theta_c)$ (and similarly for $F_i$) to lighten the notation.}
\begin{align*}
    \scriptsize
    \lhs_i = 
    -\begin{bmatrix}
    \partial_{x_i} H_i \partial_\nu x_i & \partial_{x_i} H_i \partial_\lambda x_i \\
        \diag(\lambda^\star)\partial_{x_i} F_i\partial_\nu x_i & \diag(\lambda^\star)\partial_{x_i} F_i \partial_\lambda x_i + \diag(F_i)
        \end{bmatrix}
\end{align*}
and
$q_i = 
    \scriptsize
        \begin{bmatrix}
        \partial_{x_i} H_i\partial_\theta x_i+ D_\theta H_i \\ 
        \diag(\lambda^\star)(\partial_{x_i} F_i\partial_\theta x_i  + D_\theta F_i)
        \end{bmatrix}.
$

To obtain the sensitivity of optimal solutions with respect to parameters, one only needs to combine the local and coupling Jacobians for all subproblem  via the chain rule 
\begin{align}
   D_{\theta}x_i^\star(\theta, \nu, \lambda) = \partial_{\theta} x_i^\star + \partial_{\nu}x_i^\star D_{\theta}\nu  + \partial_{\lambda} x_i^\star D_{\theta}\lambda,
   \label{eq:update_local}
\end{align}
where $\partial_{\theta} x_i^\star$ is non zero only for the blocks corresponding to the local parameters $\theta_i$ and the coupling parameters $\theta_c$. We see that the Jacobian naturally decomposes into a local and a coupling component. The former can be associated with the direct impact of a parameter change on the optimal variable, while the latter reflects an indirect impact. We note that the approach proposed here can be interpreted as a block inversion of the global Jacobian. Indeed, the coupling Jacobian $\lhs$ in~\eqref{eq:coupling_gradients} is the Schur complement of the block diagonal matrix whose blocks are the local Jacobians.

\subsection{Conditions for validity}
We now present conditions for the proposed differentiation scheme to be well-defined. 
Conditions for the Jacobian of the solution to a convex optimization program to exist and be invertible have been detailed in previous work~\cite{Fiacco1976-bt}. Instead of evaluating a single Jacobian for the full problem, the approach proposed in Section~\ref{sec:decentralized_diff} relies on evaluating $N+1$ Jacobians, all of which need to exist and be invertible. Building on results in~\cite{Fiacco1976-bt}, we state sufficiency conditions for all Jacobians involved in the framework of this paper to exist, be unique and invertible.

\begin{theorem}[Decentralized implicit differentiation]
Consider a constraint-coupled optimization problem in the form of Problem~\ref{pbm:CCOP}.
If Assumptions~\ref{A1},~\ref{A2},~\ref{A4},~\ref{A5} hold for the global problem and if Assumption~\ref{A3} holds for each subproblem individually, then all $N$ local Jacobians and the coupling Jacobian in decentralized implicit differentiation exist, are unique and invertible.
\label{th:DID}\end{theorem}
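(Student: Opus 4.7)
The plan is to reduce the theorem to a single application of the classical Fiacco sensitivity result at the level of the global problem, combined with a Schur-complement argument that identifies $\partial C$ as the Schur complement of a block-diagonal matrix made of the local Jacobians. The three pieces I need are (i) invertibility of each local KKT Jacobian $\partial_{z_i} G_i$, (ii) invertibility of the full global KKT Jacobian, and (iii) an algebraic decomposition that makes $\partial C$ in~\eqref{eq:coupling_gradients} exactly the Schur complement. From these three, invertibility of the coupling Jacobian follows from the standard identity relating determinants of a block matrix and its Schur complement.

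First I would handle the local Jacobians. For each subproblem $i$, treat $\nu^\star, \lambda^\star$ as frozen parameters and consider the convex program defining $x_i^\star$. Assumption~\ref{A1} restricted to subproblem $i$ follows from global~\ref{A1}; the local KKT conditions (Assumption~\ref{A2}) are inherited from the global KKT system by projecting onto the variables and constraints of subproblem $i$; Assumption~\ref{A3} is assumed per subproblem by hypothesis; and Assumptions~\ref{A4} and~\ref{A5} restricted to subproblem $i$ follow from their global versions because the local constraint gradients $\nabla_{x_i} g_i$ are simply the corresponding rows of the block-structured global constraint gradients, so linear independence and strict positivity of the associated multipliers are preserved. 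The classical implicit function theorem argument of~\cite{Fiacco1976-bt} then guarantees that $\partial_{z_i} G_i$ is invertible for every $i$.

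Next, I would establish global SOSC to be able to invoke Fiacco on the full problem. This is the key leverage point provided by separability: because the Lagrangian splits as $L = \sum_i L_i$ with each $L_i$ depending only on $x_i$, the Hessian $\nabla^2_{\mathrm{x}\mathrm{x}} L$ is block diagonal, so $u^T \nabla^2_{\mathrm{x}\mathrm{x}} L\, u = \sum_i u_i^T \nabla^2_{x_i x_i} L_i\, u_i$. Moreover, any $u$ in the global SOSC cone, when restricted to subproblem $i$'s block $u_i$, automatically lies in subproblem $i$'s SOSC cone, since the local constraint gradients of subproblem $i$ touch only the $x_i$ block. If $u \neq 0$, at least one $u_i \neq 0$, and per-subproblem SOSC makes that term strictly positive while the remaining terms are nonnegative. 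With global SOSC in hand and assumptions~\ref{A1}, \ref{A2}, \ref{A4}, \ref{A5} holding globally by hypothesis, Fiacco again gives invertibility of the full global KKT Jacobian $J$.

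Finally, I would carry out the block-inversion argument to conclude invertibility of $\partial C$. Arranging unknowns so that local primal--dual variables come first and the coupling multipliers $(\nu,\lambda)$ come last, the global KKT Jacobian takes the form $J = \begin{bmatrix} \mathrm{blkdiag}(\partial_{z_i} G_i) & B \\ E & D \end{bmatrix}$, where the off-diagonal blocks encode how coupling multipliers enter local stationarity and how local primals enter the coupling equations/complementarity. A direct verification shows that the Schur complement $D - E\,\mathrm{blkdiag}(\partial_{z_i} G_i)^{-1} B$ equals precisely $\partial C$ as written in~\eqref{eq:coupling_gradients}; this is where I would need to be careful that the strict-complementarity-based scaling by $\mathrm{diag}(\lambda^\star)$ in the coupling KKT is compatible with the block reduction and does not introduce a spurious singularity. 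Since the block-diagonal matrix of local Jacobians is invertible (step one) and $J$ is invertible (step two), the determinant identity $\det J = \det(\mathrm{blkdiag}(\partial_{z_i} G_i)) \cdot \det(\partial C)$ forces $\partial C$ to be invertible, completing the proof. I expect this last bookkeeping step -- matching the Schur complement to the specific form of $\partial C$ in the presence of strict complementarity scaling -- to be the main obstacle; everything else follows from Fiacco applied in the right place.
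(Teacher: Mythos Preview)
Your proposal is correct and follows essentially the same approach as the paper: establish that per-subproblem SOSC implies global SOSC via separability of the Lagrangian, invoke Fiacco's result locally and globally to obtain invertibility of the local and full KKT Jacobians, and then identify $\partial C$ as the Schur complement of the block-diagonal local Jacobian in the global Jacobian so that the determinant identity forces $\partial C$ to be invertible. The paper's proof is terser on the bookkeeping you flag (in particular it writes the bottom-right block of $J_f$ as $0$ rather than carrying the $\mathrm{diag}(F)$ term), but the structure and logic are the same.
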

The proof is provided in Appendix~\ref{app:proof_did}.
The main difference between Theorem~\ref{th:DID} and the original result in~\cite{Fiacco1976-bt} is that \textit{local} subproblems also need to satisfy the second order conditions. This is because these do not necessarily hold locally, even if they hold globally. As an illustration, consider the problem 
$\min_{x_1, x_2} \frac{1}{2}(x_1 - \theta)^2,\ 
    \text{s.t. } x_1 = x_2.$
This problem satisfies the second order conditions. Indeed, we have $\nabla^2 L = \scriptsize \begin{bmatrix}
    1 & 0\\ 0 & 0
\end{bmatrix}, $ and $u^T \nabla^2L u > 0,$ for all $ u = \alpha [1, 1]^T,\ \alpha \in \mathbb{R}_0$. However, the local second order conditions for the second subproblem $\min_{x_2} 0 - \nu x_2$ is $0 > 0$, which cannot be true. Therefore, for this example, decentralized implicit differentiation would not be applicable as the second local Jacobian would be singular.

\textbf{\textit{Remark}:} If a subproblem is a Linear Program (LP) with primal variable $x_i \in \R^{n_i}$, Assumption~\ref{A3} implies that $x_i$ lies at the vertex of $n_i$ \textit{local} constraints. Therefore, that subproblem cannot be involved in additional tight global coupling constraints as it would otherwise be overconstrained. While gradients would still be well-defined, we will not discuss such examples as the subproblem $i$ would virtually be uncoupled from the others. In the remainder of this text, we will therefore assume each subproblem to be \textit{strictly convex}.

\subsection{Complexity analysis}\label{sec:complexity}
Both the centralized and decentralized differentiation schemes require solving linear systems (see~\cite{Dontchev2009-ep} and Eqs.~\eqref{eq:local_jac},~\eqref{eq:coupling_gradients}).
In general, solving a linear system of dimension $m$ requires $O(m^3)$ operations. For a problem in the form of Problem~\ref{pbm:CCOP}, with $N$ subproblems each with $n$ local variables and $l$ local constraints, and with $\Lambda$ coupling constraints, centralized gradient computation requires $O((N(n+l)+\Lambda)^3)$ operations. On the other hand, the decentralized approach decomposes it in $N$ problems of size $n+l$ and one problem of dimension $\Lambda$. In that case, the complexity is $O(\Lambda^3 + N(n+l)^3)$. Therefore, decentralized computation can prove advantageous whenever $\Lambda$ is small compared to other dimensions in the system. 
We define the \textit{coupling ratio} $\rho = \frac{\Lambda}{N(n+l)}$ as a measure of how strongly coupled all the subproblems are with each other. Intuitively, one expects the computational benefit of the decentralized computation scheme to grow as $\rho$ decreases. Indeed, we can write the ratio of expected compute times as
\begin{align}
    \eta = \frac{\Lambda^3 + N(n+l)^3}{(\Lambda+N(n+l))^3} = \frac{\rho^3 + 1/N^2}{(1+\rho)^3}.
    \label{eq:complexity}
\end{align}
From this equation, we deduce that when the number of subproblems $N$ is large and $\rho$ is small, the ratio of compute times grows as $\rho^{3}$. In settings where the coupling between subproblems is weak (i.e. $\rho$ is small), one can therefore expect significant speedup compared to centralized computation. Let us note that the above considerations do not factor in the possibility to conduct local operations in parallel. In this case, total compute time could be further decreased by computing the local gradients concurrently.
\section{Distributed Implicit Differentiation in Graph-Structured Problems}\label{sec:distributed}


In graph-structured problems, the graph explicitly models the coupling relationships between subproblems. Such structure results in \textit{locality}, i.e. an exponential decay of sensitivities. In this section, we discuss how this feature of graph-structured problems allows for a fully distributed scheme for implicit differentiation, and provide associated theoretical guarantees.


\subsection{Distributed solution to coupling system}

The scheme derived thus far requires a central computing node to solve the coupling problem in~\eqref{eq:coupling_gradients}, as it involves gradient information from all subproblems.
However, locality results imply that, in a graph-structured optimization program, the sensitivity of the optimal variables $x_k^\star$ of subproblem $k$ with respect to parameters $\theta_j$ of subproblem $j$ decays exponentially with the distance between subproblems $k$ and $j$ in the graph---the more distant they are, the smaller the sensitivity of one with respect to the other's parameters. If each subproblem is attributed to an individual computing node, one might expect that a reasonable local approximation of the coupling problem can be obtained by considering information from the nearest nodes in the graph only.

In this section, we formalize this intuition and propose a distributed scheme---adapted from the \textit{synchronous coordination} scheme in~\cite{Shin2020-jc}---for gradient computation where each node solves a local approximation of the original problem, iteratively. Those approximations are effectively local projections of the original problem, and simply consist of an appropriate selection of rows and columns corresponding to subsets of the coupling constraints in the original system.

In order to describe the distributed algorithm, we first need to define some terminology specifying the restriction of the system in Equation \eqref{eq:coupling_gradients} to subsets of its rows and columns. 
Formally, we model the structure of Problem~\ref{pbm:CCOP} as a bipartite graph $\G_b$, whose nodes are split between those that represent the subproblems, $\mathcal{V}_p$ ($|\mathcal{V}_p| = N$), and those that represent the constraints, $\mathcal{V}_c$ ($|\mathcal{V}_c| = \Lambda_H + \Lambda_F$). An edge $(k,j),\ k \in \V_p,\ j\in\V_c,$ is present if subproblem $k$ is directly involved in constraint $j$.
For a given subproblem $k$, the local projection of the coupling problem is obtained by keeping the rows and columns in~\eqref{eq:coupling_gradients} corresponding to constraints that are within a given distance 
from node $k$ in $\G_b$.
Specifically, we associate to node $k$ a restricted set of constraint nodes in the bipartite graph
$$
\mathcal{V}_k^\omega = \{c \in \mathcal{V}_c \ |\  d_{\G_b}(c, k) \leq 2\omega +1\},
$$
where $ d_{\G}(i, j)$ is the distance between nodes $i, j$ on a graph $\G$ and the parameter $\omega \in \mathbb{Z}$ sets the distance of interest. 
For instance, $\mathcal{V}_k^0$ is the set of constraints that subproblem $k$ is directly involved in. In the example in Fig.~\ref{fig:graph_illustration}, subproblem 1 is one hop away from constraint 5, so $\V_1^0 = \{5\}$. However, the large connectivity of subproblem 2 results in $\V_1^{1} =  \{5, 6, 7\} = \V_2^{0}$.

\begin{figure}
    \centering
    \includegraphics[width=.25\textwidth]{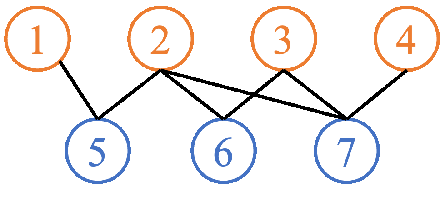}
    \caption{Illustrative bi-partite graph linking subproblems in orange and coupling constraints in blue.}
    \label{fig:graph_illustration}
    \vspace{-10pt}
\end{figure}

These sets are used to define projection operators for each subproblem $k$ and value of $\omega$ as follows:
(a) $T_k := \{e_i \}_{:, i\in \mathcal{V}_k^0 }$ projects on the set of constraints that subproblem $k$ is directly involved in; 
(b) $T_k^\omega := \{e_i \}_{:, i\in \mathcal{V}_k^\omega }$ projects on the set of constraints that are within a distance of $2\omega+1$ away from subproblem $k$; 
(c) $T_{-k}^\omega := \{e_i \}_{:, i\in \mathcal{V}_c\setminus \mathcal{V}_k^0 }$ projects on the set of constraints that are beyond a distance of $2\omega+1$ from subproblem $k$.
These projection operators are used to define
\begin{align}
\label{eq:projections}
\begin{array}{rl}
    \lhs^\omega_k &:= (T^\omega_k)^T \lhs T^\omega_k,\\ 
    \lhs^\omega_{-k} &:= (T^\omega_k)^T \lhs T^\omega_{-k},\\
    q^\omega_k &:= (T_k^\omega)^Tq.
\end{array}
\end{align}
These matrices correspond to sub-matrices obtained by selecting appropriate rows and columns.
Note that constructing them does not require constructing the full $\lhs$ matrix or $q$ vector. Indeed, $\lhs,\ q$ are a sum of local components as shown in~\eqref{eq:coupling_gradients_sum}, and only a limited number of terms are required to construct a given local projection.\footnote{Rigorously, each subproblem $k$ collects data from nodes in $\mathcal{P}_k^\omega = \{ i \in \mathcal{V}_p \ | \ d_\G(i, \mathcal{V}_k^\omega) \leq 1\}$, the set of subproblems that are directly involved in at least one of the constraints in $\mathcal{V}_k^\omega$.}

Now that we have defined these restricted systems, we are in a position to present the distributed algorithm for approximating the coupling Jacobian (i.e., finding $\hat{y}$ such that $\partial C \hat{y} \approx q$).
The objective, however, is not for each computing node to estimate the entire coupling Jacobian; 
instead, at iteration $t$, the $k$-th computing node maintains an estimate of only the components of $\hat{y}^{(t)}$ corresponding to the constraints in $\V_k^0$ (i.e. those it directly participates in), 
which we denote $\hat{y}^{(t)}_k$.
In the following, all operations are localized based on these restricted subsets. 
This is sufficient for computing Equation \eqref{eq:update_local}, as the local Jacobian with respect to the dual coupling variables $\partial_\nu x_k^\star,\ \partial_\lambda x_k^\star$ are nonzero only for the coupling constraints in $\V^0_k$. 

At the beginning of the algorithm, it is assumed that each node has computed its local Jacobian according to Equation \eqref{eq:local_jac}.
The first estimate $\hat{y}^{(0)}$ is initialized at random and shared across the network. Each node builds the above matrices and updates its local estimate of the coupling Jacobian according to 
\begin{align}
\label{eq:iteration}
    \hat{y}_k^{(t+1)} = S_k^\omega \hat{y}^{(t)} + U_k^\omega q.
\end{align}
where
\begin{align}
    S_k^\omega &:= - (T_k)^T T_k^\omega (\lhs_k^\omega)^{-1} \lhs_{-k}^\omega (T_{-k}^\omega)^T \label{eq:Sk_def}, \\
    U_k^\omega & := (T_k)^TT_k^\omega(\lhs_k^\omega)^{-1}(T_k^\omega)^T \label{eq:Uk_def}.
\end{align}



The local step~\eqref{eq:iteration} can be interpreted as a \textit{block-Jacobi} iteration. In other words, it consists of solving the coupling system~\eqref{eq:coupling_gradients} only for the constraints in $\V_k^\omega$ while keeping the estimates of the coupling Jacobian constant for other constraints, and then as mapping the final result to $\V_k^0$. We will show in the next section that the local projections $\lhs_k^\omega$ are indeed invertible under mild assumptions, guaranteeing that~\eqref{eq:Sk_def} and~\eqref{eq:Uk_def} are well defined. 

At this point, different subproblems may have different estimates of the coupling Jacobian corresponding to the same constraints. The results of iteration~\eqref{eq:iteration} thus need to be aggregated for every constraint. This is performed in three steps.
First, nodes send their current estimates of the coupling Jacobian to nodes involved in the same constraint, i.e. to nodes that are two hops away in the bipartite graph $\G_b$. Then, coupling Jacobian estimates associated with a given constraint are averaged locally so that it is the same across all nodes involved in that constraint. Finally, results are broadcast throughout the network so that each node has a local copy of the entire vector $\hat{y}^{(t)}$.
The resulting scheme is the same as the \textit{synchronous coordination} algorithm in~\cite{Shin2020-jc} except for the presence of the aggregation step via message passing.

From a global perspective, the entire scheme can be written compactly as
\begin{align}\label{eq:sync_coord}
    \hat{y}^{(t+1)} = \Gamma(S^\omega \hat{y}^{(t)} + U^\omega q),
\end{align}
where the aggregation matrix $\Gamma \in \R^{\Lambda \times \sum_k |\V_k^\omega|}$ simply averages the results over the nodes involved in each constraint\footnote{A formal definition of $\Gamma$ is deferred to the Appendix~\ref{app:Gamma} for notational simplicity.}
and where we define the matrices $S^\omega, U^\omega$ as $S^\omega =
    \begin{bmatrix}
        (S^\omega_1)^T, ..., (S^\omega_N)^T
    \end{bmatrix}^T,\
    U^\omega = \begin{bmatrix}
        (U^\omega_1)^T, ..., (U^\omega_N)^T
    \end{bmatrix}^T.$

\subsection{Theoretical Guarantees}

We now turn to the theoretical properties of the proposed scheme, namely the well-posedness of the update equations and the convergence rate of the overall algorithm.

For the iteration in~\eqref{eq:iteration} to be well-defined, the local projections $\lhs_k^\omega$ need to be invertible.
Note that, if $\partial C$ is positive definite, its local projections are automatically positive definite and therefore invertible~\cite{Horn1990-ea}. However, in general, Jacobians of KKT matrices are \textit{not} positive definite. Indeed, in the presence of inequality constraints, the complementary slackness conditions imply that $\partial C$ is not even symmetric.
With the following result, we show that these local projections are nonetheless invertible under mild conditions.

\begin{lemma}
    \label{lemma:Lkw_invertible}
    Consider a constraint-coupled problem in the form of Problem~\ref{pbm:CCOP} and an associated optimal solution $z^\star = (x^\star, \lambda^\star, \nu^\star)$. If the problem is strictly convex and satisfies Assumptions~\ref{A1},~\ref{A2},~\ref{A4} \&~\ref{A5}, then the projections $\lhs_k^\omega$ are invertible for all $k \in \V_p, \ \omega \in \mathbbm{Z}$.
\end{lemma}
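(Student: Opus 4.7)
My plan is to show that $\lhs_k^\omega$ is, up to relabeling, itself the coupling Jacobian of a smaller constraint-coupled problem that inherits Assumptions~\ref{A1},~\ref{A2},~\ref{A4},~\ref{A5} and strict convexity from the original. Invertibility of $\lhs_k^\omega$ then follows from Theorem~\ref{th:DID} applied to that reduced problem. This is attractive because it lets me reuse the existing validity theorem rather than fighting with the non-symmetric structure of $\lhs$ (which, due to the complementary slackness rows, is not positive definite and so does not admit an off-the-shelf Schur-complement argument on principal submatrices).

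\textbf{Constructing the reduced problem.} Let $\mathcal{P}_k^\omega = \{i \in \V_p : d_{\G_b}(i, \V_k^\omega) \leq 1\}$ be the set of subproblems that participate in at least one coupling constraint in $\V_k^\omega$. I would define a reduced instance of Problem~\ref{pbm:CCOP} that (a) keeps only the subproblems $i \in \mathcal{P}_k^\omega$ with their local constraint sets $\mathcal{C}_i(\theta_i)$ intact, (b) retains only the coupling constraints indexed by $\V_k^\omega$, and (c) replaces each local objective by
\begin{equation*}
    \tilde{f}_i^0(x_i) = f_i^0(x_i,\theta_i) + \!\!\sum_{j \notin \V_k^\omega}\!\!\Bigl(\nu_j^\star H_{i,j}(x_i,\theta_c) + \lambda_j^\star F_{i,j}(x_i,\theta_c)\Bigr),
\end{equation*}
i.e.\ absorbing all coupling constraints \emph{outside} $\V_k^\omega$ as linear perturbations with multipliers frozen at $\nu^\star,\lambda^\star$. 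By construction this is again a problem of the form of Problem~\ref{pbm:CCOP}, now with $|\V_k^\omega|$ coupling constraints.

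\textbf{Transfer of the assumptions.} I would verify that the restriction $\tilde{z}^\star = (\{x_i^\star\}_{i \in \mathcal{P}_k^\omega}, \{\nu_j^\star\}_{j \in \V_k^\omega}, \{\lambda_j^\star\}_{j \in \V_k^\omega})$ satisfies the KKT conditions of the reduced problem: the stationarity condition is identical to that of the original (the frozen-multiplier terms reproduce exactly the gradient contributions of the dropped constraints), and primal/dual feasibility plus complementary slackness for the retained constraints follow from the same conditions in the original problem. Differentiability (Assumption~\ref{A1}) is immediate; LICQ (Assumption~\ref{A4}) carries over because the set of binding constraints in the reduced problem is a subset of the binding constraints in the original; strict complementary slackness (Assumption~\ref{A5}) carries over because the dual values on retained active inequality constraints are unchanged. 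Strict convexity of each subproblem is preserved since adding linear terms to the objective does not alter the Hessian, so Assumption~\ref{A3} holds locally, as required by Theorem~\ref{th:DID}.

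\textbf{Identifying $\lhs_k^\omega$ as the coupling Jacobian and the main obstacle.} The remaining step is to check that the coupling Jacobian produced for the reduced problem, constructed via~\eqref{eq:coupling_gradients} applied in the reduced setting, coincides with $(T_k^\omega)^T \lhs T_k^\omega$. The local Jacobians $\partial_\nu x_i^\star,\partial_\lambda x_i^\star$ of the reduced problem (restricted to multipliers in $\V_k^\omega$) agree with the corresponding columns of the original local Jacobians, because differentiating the reduced subproblem's KKT system differs from differentiating the original only through the frozen-multiplier linear terms, which vanish when taking derivatives with respect to $\nu_{\V_k^\omega},\lambda_{\V_k^\omega}$. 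Assembling the reduced coupling system via~\eqref{eq:coupling_gradients_sum} over $i \in \mathcal{P}_k^\omega$ then reproduces exactly the row/column selection $(T_k^\omega)^T \lhs T_k^\omega = \lhs_k^\omega$. Applying Theorem~\ref{th:DID} to the reduced problem yields invertibility of this matrix, which is the desired conclusion. I expect the delicate point to be precisely this bookkeeping step — confirming that restricting the dual variables and the coupling constraints to $\V_k^\omega$ commutes with the differentiation operation defining $\lhs$, and that subproblems in $\mathcal{P}_k^\omega$ that participate in coupling constraints outside $\V_k^\omega$ are handled correctly by the frozen-multiplier substitution.
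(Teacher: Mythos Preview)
Your proof is correct and takes a genuinely different route from the paper's argument. The paper proceeds by explicit linear algebra: it writes $\lhs$ in the factored form $\lhs = -\diag(\tilde\lambda)\,M_C\bar{G}M_C^T - \diag(\kappa)$ with $\bar{G}$ the block-diagonal matrix of primal sub-blocks of the inverse local KKT Jacobians, proves via a separate technical lemma that $\bar{G}$ is PSD (this requires a non-obvious idempotence argument on $I+BS^{-1}CA^{-1}$), shows $M_C\bar{G}M_C^T$ is positive definite, and then concludes that each principal submatrix $M_k^\omega$ is PD so that $\lhs_k^\omega$, after permutation to block-triangular form, is full rank. Your approach is structural rather than computational: you recognise $\lhs_k^\omega$ as itself the coupling Jacobian of a smaller instance of Problem~\ref{pbm:CCOP} and invoke Theorem~\ref{th:DID} directly. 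This is more economical and avoids the auxiliary PSD lemma entirely; the paper's route, on the other hand, exposes the underlying PD structure of $M_k^\omega$, which can be useful for subsequent conditioning and rate estimates.

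One small wording issue: you justify preservation of Assumption~\ref{A3} by saying ``adding linear terms to the objective does not alter the Hessian''. The frozen-multiplier terms $\nu_j^\star H_{i,j}+\lambda_j^\star F_{i,j}$ are only linear when the coupling constraints are affine. The correct (and equally easy) statement is that the \emph{Lagrangian} of the reduced subproblem at $\tilde{z}^\star$ coincides with the Lagrangian of the original subproblem at $z^\star$---the frozen terms merely migrate from the dualised-constraint part to the objective part---so the Lagrangian Hessian and hence the local second-order condition are identical. With that adjustment the argument is airtight; your identification of $\widetilde{\lhs}$ with $(T_k^\omega)^T\lhs T_k^\omega$ is correct because any subproblem $i\notin\mathcal{P}_k^\omega$ contributes zero to every row and column of $\lhs$ indexed by $\V_k^\omega$.
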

The proof is provided in Appendix~\ref{proof:Lkw_invertible}. 
The significance of this result is that the iteration in Equation~\eqref{eq:iteration} is well-defined and applicable to our problem setting.
Under these conditions, the following result proves the convergence of the proposed algorithm.

\begin{theorem}[Adapted from{~\cite[Theorem 2]{Shin2020-jc}}]\label{th:convergence_rate}
Consider~\eqref{eq:coupling_gradients} and the decentralized scheme in~\eqref{eq:sync_coord} for a given set of node-associated constraint sets $\{\V_k^\omega\}_{k\in \V_p}$ and $\omega \geq 0$. Assume that the conditions of Lemma~\ref{lemma:Lkw_invertible} are verified. Then the sequence generated by~\eqref{eq:sync_coord} satisfies
\begin{align}
    \|y^{(t)} - y^\star \|_{\infty} \leq \alpha^t \|y^{(0)} - y^\star \|_\infty,
\end{align}
where the convergence rate $\alpha$ is defined by
\begin{align}
    \label{eq:alpha}
   \alpha := \max_{k \in \V_p} \frac{R_k \bar{\sigma}_k}{\ubar{\sigma}^2_k}  
   \left(
   \frac{\bar{\sigma}^2_k - \ubar{\sigma}^2_k}{\bar{\sigma}_k^2 + \ubar{\sigma}_k^2}
   \right)^{\left\lceil\frac{\omega}{2B_{\lhs_k^\omega}} - 1\right\rceil_+},
\end{align}
with $\bar{\sigma}^2_k$ and $\ubar{\sigma}^2_k$ the largest and smallest singular values of $\lhs_k^\omega$, respectively,
$R_k := \sum_{i \in \V_k^\omega, j \in \V \setminus \V_k^\omega} |(\lhs^\omega_{-k})_{i,j}|$ and $B_{\partial C_k^\omega}$ the bandwidth of matrix $\partial C_k^\omega$ induced by the bipartite graph\footnote{See Definition 3.3 in~\cite{Shin2022-jo}.}.
\end{theorem}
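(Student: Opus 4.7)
The plan is to adapt the convergence argument for the synchronous coordination scheme in~\cite{Shin2020-jc}, with two essential modifications: accounting for the aggregation operator $\Gamma$ and justifying the decay-of-inverse estimates when $\lhs$ is non-symmetric (due to the complementary-slackness rows). First, I would verify that the true coupling Jacobian $y^\star$ is a fixed point of the iteration~\eqref{eq:sync_coord}. Since $\lhs\, y^\star = q$, the restricted identity $\lhs_k^\omega (T_k^\omega)^T y^\star = (T_k^\omega)^T q - \lhs_{-k}^\omega (T_{-k}^\omega)^T y^\star$ holds for every $k$, so applying $S_k^\omega$ and $U_k^\omega$ reproduces the corresponding block of $y^\star$; averaging identical values through $\Gamma$ preserves the fixed point. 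Subtracting from~\eqref{eq:sync_coord} yields the error recursion
\begin{align*}
    y^{(t+1)} - y^\star = \Gamma S^\omega \bigl(y^{(t)} - y^\star\bigr).
\end{align*}

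Because $\Gamma$ averages the per-node estimates over the nodes participating in each constraint, it is row-stochastic with nonnegative entries; hence $\|\Gamma z\|_\infty \leq \max_k \|z_k\|_\infty$. This reduces the task to bounding $\|S_k^\omega\|_\infty$ for each $k$ and then taking the maximum, which gives the outer $\max_{k \in \V_p}$ in~\eqref{eq:alpha}. Using~\eqref{eq:Sk_def}, I would split the bound on $\|S_k^\omega\|_\infty$ into two factors: the entrywise magnitude of $(\lhs_k^\omega)^{-1}$ on entries that bridge $\V_k^0$ to the boundary of $\V_k^\omega$, and the row-sum factor $R_k$ that controls the mass injected by $\lhs_{-k}^\omega$.

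The technical heart of the argument is a Demko-type decay estimate on $(\lhs_k^\omega)^{-1}$, which I would import from~\cite[Sec. 3]{Shin2022-jo}: if $\lhs_k^\omega$ has bandwidth $B_{\lhs_k^\omega}$ with respect to the bipartite graph and its singular values lie in $[\ubar{\sigma}_k, \bar{\sigma}_k]$, then
\begin{align*}
    \bigl|\bigl[(\lhs_k^\omega)^{-1}\bigr]_{ij}\bigr| \leq \frac{\bar{\sigma}_k}{\ubar{\sigma}_k^2}\, r_k^{\lceil d(i,j)/B_{\lhs_k^\omega}\rceil}, \qquad r_k := \frac{\bar{\sigma}_k^2 - \ubar{\sigma}_k^2}{\bar{\sigma}_k^2 + \ubar{\sigma}_k^2}.
\end{align*}
Only entries of $(\lhs_k^\omega)^{-1}$ with row index in $\V_k^0$ and column index near the boundary of $\V_k^\omega$ enter $S_k^\omega$; by construction the bipartite distance between these two index sets is at least of order $2\omega$, producing the exponent $\lceil \omega/(2B_{\lhs_k^\omega}) - 1\rceil_+$ after accounting for the bipartite-to-constraint factor of two. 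Multiplying the entrywise bound by $R_k$, taking the maximum over $k$, and iterating the recursion gives the claimed $\alpha^t$ contraction.

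The main obstacle I anticipate is justifying the decay estimate in the non-symmetric setting: the bound in~\cite{Shin2022-jo} is usually stated for systems with SPD structure, whereas here $\lhs$ inherits asymmetric $\diag(\lambda^\star)\partial_x F_i$ blocks from complementary slackness, and Lemma~\ref{lemma:Lkw_invertible} only guarantees invertibility of $\lhs_k^\omega$. I would handle this by passing to the SPD matrix $(\lhs_k^\omega)^T \lhs_k^\omega$, whose bandwidth at most doubles and whose extreme eigenvalues are $\ubar{\sigma}_k^2$ and $\bar{\sigma}_k^2$ (explaining precisely the ratio $r_k$ in~\eqref{eq:alpha}), and then recovering $(\lhs_k^\omega)^{-1} = \bigl((\lhs_k^\omega)^T \lhs_k^\omega\bigr)^{-1} (\lhs_k^\omega)^T$ to obtain the $\bar{\sigma}_k/\ubar{\sigma}_k^2$ prefactor; the extra factor of two in bandwidth is absorbed into the $2B_{\lhs_k^\omega}$ denominator in~\eqref{eq:alpha}. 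The remaining pieces---the fixed-point identity, the row-stochasticity of $\Gamma$, and the accounting for $R_k$---are routine once the decay estimate is in hand.
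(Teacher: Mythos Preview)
Your proposal is correct and follows essentially the same route as the paper: establish the fixed-point identity, use $\|\Gamma\|_\infty = 1$ to reduce to bounding $\|S_k^\omega\|_\infty$, and then invoke the exponential decay-of-inverse estimate for the non-symmetric $\lhs_k^\omega$. The only difference is presentational: the paper treats the non-symmetric decay bound as a black box by citing \cite[Theorem~3.6]{Shin2022-jo} directly (that result already covers the non-SPD case), whereas you propose to rederive it via the Gram matrix $(\lhs_k^\omega)^T\lhs_k^\omega$---which is in fact how that theorem is proved, and which explains the appearance of $\bar\sigma_k/\ubar\sigma_k^2$, the ratio $r_k$ in terms of squared singular values, and the doubled bandwidth in the exponent.
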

The proof is provided in Appendix~\ref{proof:convergence_rate}. 
In contrast to \cite[Theorem 2]{Shin2020-jc}, Theorem~\ref{th:convergence_rate} does not require the system to be positive definite.
The main difference between the proofs is that the result in this paper relies on a generic theorem in~\cite[Theorem 3.6]{Shin2022-jo} that extends exponential decay of sensitivities to non-positive definite systems. 


The consequence of Theorem \ref{th:convergence_rate} is that the sequence~\eqref{eq:sync_coord} converges linearly to a solution of Equation~\eqref{eq:coupling_gradients}.
The convergence rate, $\alpha$, explicitly decreases exponentially with $\omega$, and further depends implicitly on $\omega$ through the conditioning and graph-induced structure of the local projections.
While larger values of $\omega$ will decrease $\alpha$, it comes at a cost of more expensive communication, as more data needs to be exchanged both for constructing the projection matrices $\lhs_k^\omega, \lhs_{-k}^\omega, q_k^\omega$ and synchronizing the updates at every iteration. The computational cost of one iteration of~\eqref{eq:sync_coord} also increases with $\omega$ due to the increased size of  $S_k^\omega$.

Equation~\eqref{eq:alpha} suggests that the algorithm converges rapidly if $\omega$ is large compared to the bandwidth $B_{\lhs_k^\omega}$ of the local projections. Therefore, it is particularly effective for problems that have a naturally small bandwidth, i.e. highly structured problems.
In Lemma~\ref{lemma:banded} in Appendix~\ref{proof:banded}, we show that the structure of the coupling Jacobian is directly determined by that of the coupling constraints. Indeed, $\lhs$ can be interpreted as the Lagrangian of the graph underlying the coupling constraints.

\section{Numerical Experiments}\label{sec:experiments}
In this section, we validate our claims on the complexity and convergence rate of the proposed scheme.

\subsection{Validation of the complexity model}\label{sec:validation}

\begin{figure*}[ht!]
    \centering
    \begin{subfigure}[t]{0.48\textwidth}
    \centering
    \includegraphics[width=.8\textwidth]{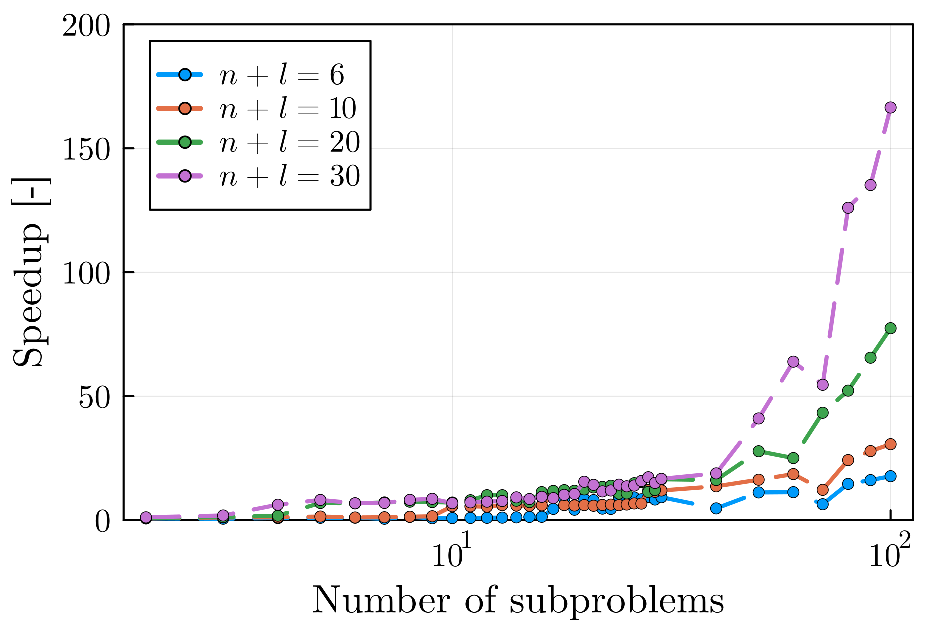}
    \caption{Speedup as a function of number of subproblems, for a fixed number of coupling constraints $\Lambda=2$. Each curve represents a problem with a different subproblem size. Each point is the ratio of the minimum compute times over 20 trials for each method.}
    \label{fig:scaling}
    \end{subfigure}
    \hfill
    \begin{subfigure}[t]{0.48\textwidth}
     \centering
     \includegraphics[width=.8\textwidth]{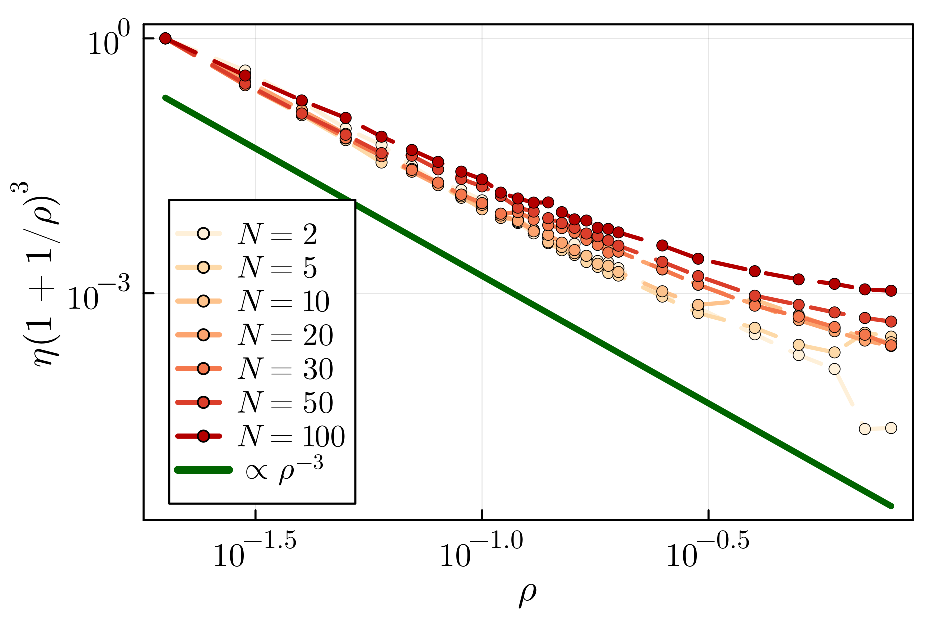}
     \caption{Rescaled ratio of compute times, for different numbers of subproblems while keeping $\rho$ constant. For small $\rho$, the scaling is as $\rho^{-3}$. All curves are normalized by their value at $\rho = 0.02$ to extract the scaling behavior.}
    \label{fig:scaling_rho_fixed}
\end{subfigure}
\caption{Numerical experiments testing the complexity model of~\eqref{eq:complexity}. The decentralized scheme delivers significant computational gains in comparison to centralized differentiation, especially for weakly coupled problems (i.e. small $\rho$).}
\vspace{-10pt}
\end{figure*}

Two different experiments were designed to test the simple scaling model of Eq.~\eqref{eq:complexity}. For these experiments, data are generated at random according to input dimensions, and each subproblem is given the same size. 

In the first experiment, the number of coupling constraints $\Lambda$ and the size of local problems $n+l$ are kept constant and only the number of subproblems $N$ is increased (see Fig.~\ref{fig:scaling}). This results in a speedup ($\eta^{-1}$) that increases rapidly with $N$ and with subproblem size -- this is consistent with Eq.~\eqref{eq:complexity}. 


In the second experiment, we keep the coupling ratio $\rho$ constant while increasing the number of subproblems $N$ by adjusting the number of coupling constraints $\Lambda$. The size of the local problems $(n+l)$ is kept fixed. Results are reported in Fig.~\ref{fig:scaling_rho_fixed}. 
In this case, we do not expect an exponential increase in speedup because the number of coupling constraints itself increases. According to Eq.~\eqref{eq:complexity}, when $\rho$ is small enough, $\eta(1+1/\rho)^3 \sim \rho^{-3}$. This expectation is validated in Fig.~\ref{fig:scaling_rho_fixed}. We also observe a deviation from this scaling as $N$ increases. This is consistent with the expectation that $\eta \to (1 + 1/\rho)^{-3}$ as $N \to \infty$, and is due to the increasing relative weight of local gradient computation.


These experiments are in agreement with the scaling model of Eq.~\eqref{eq:complexity}. The computational gains provided by the distributed framework in this paper are significant, especially when the number of subproblems is large and when the coupling ratio $\rho$ is small.

\subsection{Distributed gradient computation}\label{sec:distributed_gradients}

\begin{figure*}[h]
    \centering
    \begin{subfigure}[t]{0.48\textwidth}
         \centering
        \includegraphics[width=.8\textwidth]{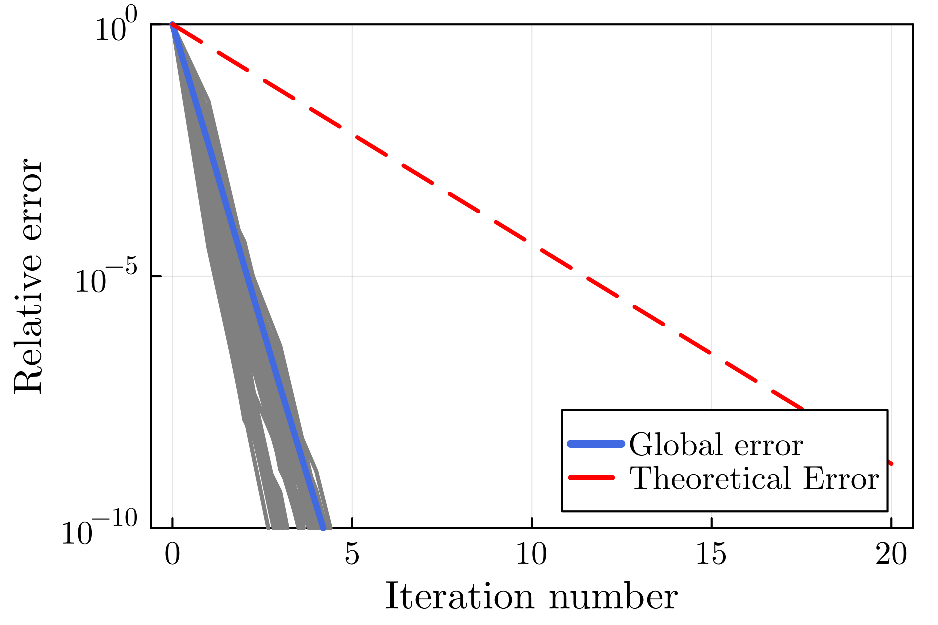}
    \caption{Relative error as a function of the iteration number for $\omega=1$. Gray lines: relative error for $\hat{y}_k$. Blue line: relative error for the full coupling Jacobian (i.e. $y^{(t)}$ in~\eqref{eq:sync_coord}).}
    \label{fig:approx_1}
     \end{subfigure}
    \hfill
    \begin{subfigure}[t]{0.48\textwidth}
    \centering \includegraphics[width=.8\textwidth]{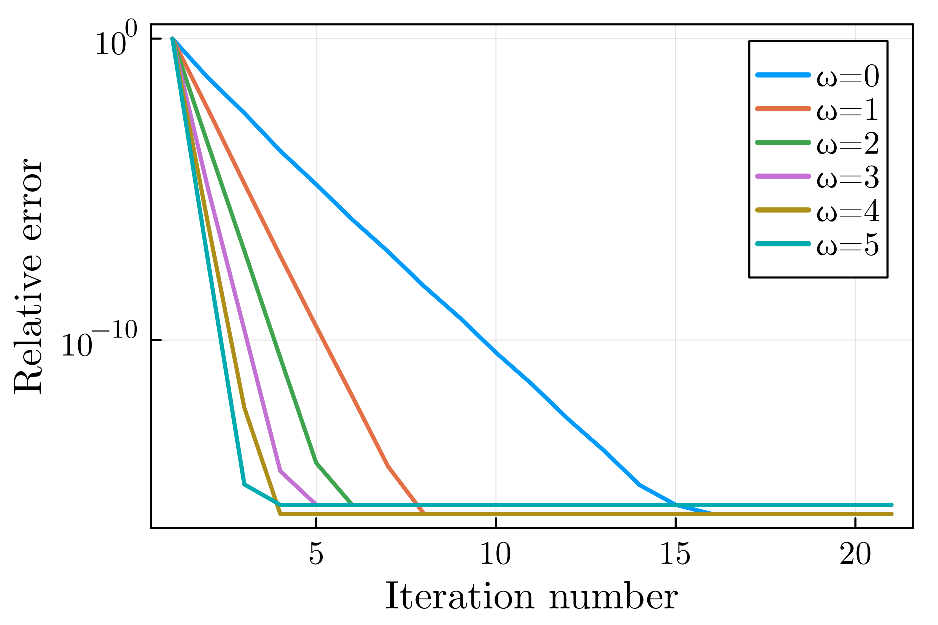}
    \caption{Relative error as a function of the iteration number for different values of the overlap parameter $\omega$.}
    \label{fig:conv_rate_omega}
    \end{subfigure}
     \caption{Relative error between the coupling Jacobian estimate and its true value for a randomly generated problem with the following parameters: each of the $N=50$ subproblems is constrained by $l=2$ inequality and $k=2$ equality constraints, and is involved in 2 equality coupling constraints. The distributed scheme converges linearly, at a rate that increases with $\omega$.
     }
    \label{fig:approx}
\end{figure*}

The empirical convergence properties of the distributed scheme on a randomly generated problem are illustrated in Fig.~\ref{fig:approx}. In Fig.~\ref{fig:approx_1}, we report the relative $\infty$-norm error between the estimate of the coupling Jacobian and its true value, as a function of the number of iterations for $\omega=1$. 
We also report the theoretical upper bound on the convergence rate from Theorem~\ref{th:convergence_rate}.
We see that linear convergence is observed for every subproblem. We note that convergence is much faster than the bound indicates, hinting at potential for improvement. Empirical errors for different values of $\omega$ are reported in Fig.~\ref{fig:conv_rate_omega}.  
From Theorem~\ref{th:convergence_rate}, we expect the convergence rate to increase with $\omega$, which is seen clearly in Fig.~\ref{fig:conv_rate_omega}.

\section{Rigorous approximation of Marginal Emissions in dynamic power systems models}\label{sec:MEF_approx}

Dynamic problems, i.e. optimization problems solved over time, are one general problem class that can benefit substantially from the proposed approach. Indeed, a time horizon corresponds to a linear graph where each node represents a snapshot in time of the system at hand. Each node is only connected to the timesteps immediately preceeding or succeeding it, resulting in a constant bandwidth of 1 regardless of the time horizon the problem is solved over. As shown in the previous section, a good approximation of gradients can be obtained by considering only a small and localized window centered at each time $t$. 

In dynamic power system models, accurate quantification of the emissions impacts of electricity demand relies on the ability to compute gradients efficiently. In particular, \textit{marginal emissions rates} are defined as the sensitivity of emissions to electricity demand in power systems~\cite{Siler-Evans2012-ve,Rudkevich2012-tx} and are an important emissions metric in the context of policies that aim to shift electricity demand or supply. 
Recent work has demonstrated the applicability of \textit{centralized} differentiable optimization techniques to compute marginal emissions rates in power system models~\cite{Fuentes_Valenzuela2023-dj}. However, those methodologies do not scale well to large networks solved over long time horizons.
Rather than computing those emissions rates jointly for the entire time horizon, the results in this paper justify computing an approximation relying only on local information.


In this section, we apply one iteration of the algorithm in~\eqref{eq:sync_coord} for different values of the horizon parameter $\omega$ on network models of the U.S.\ Western Interconnect. 
We model the power system under the DC-OPF approximation~\cite{Fuentes_Valenzuela2023-dj}
, with network data generated using the PyPSA USA model.
The nodes in the network can be clustered to between 30 and 4786 nodes as needed.
In our experiments, we 
configure the number of batteries and storage penetration, adding batteries to the nodes with the highest peak renewable generation. In particular, we consider three different levels of storage penetration, defined as a function of the average daily load: 1\%, 10\% and 25\%. We solve each problem for a total horizon of 120 hours, and analyze the marginal emissions estimates. We report results for a 50-node aggregation in Fig.~\ref{fig:approx_MEF}. Similar results were obtained with both smaller and larger networks.

As expected, the marginal emissions approximation error decreases exponentially with the horizon parameter $\omega$ for all three scenarios, albeit at different rates. Indeed, the time horizon necessary to achieve a given level of accuracy increases with storage penetration. As storage capacity increases, batteries have an increasing ability to act as buffers over long time horizons. These results clearly illustrate the \textit{dynamic}, i.e. time-dependent, nature of marginal emissions rates. It is consistent with previous work demonstrating that a static approximation is often not appropriate~\cite{Fuentes_Valenzuela2023-dj}. 
The methodology developed in this paper thus offers a rigorous and efficient approach to estimate marginal emissions in a dynamic setting without using all of the problem data. 


\begin{figure}
    \centering
    \includegraphics[width=.38\textwidth]{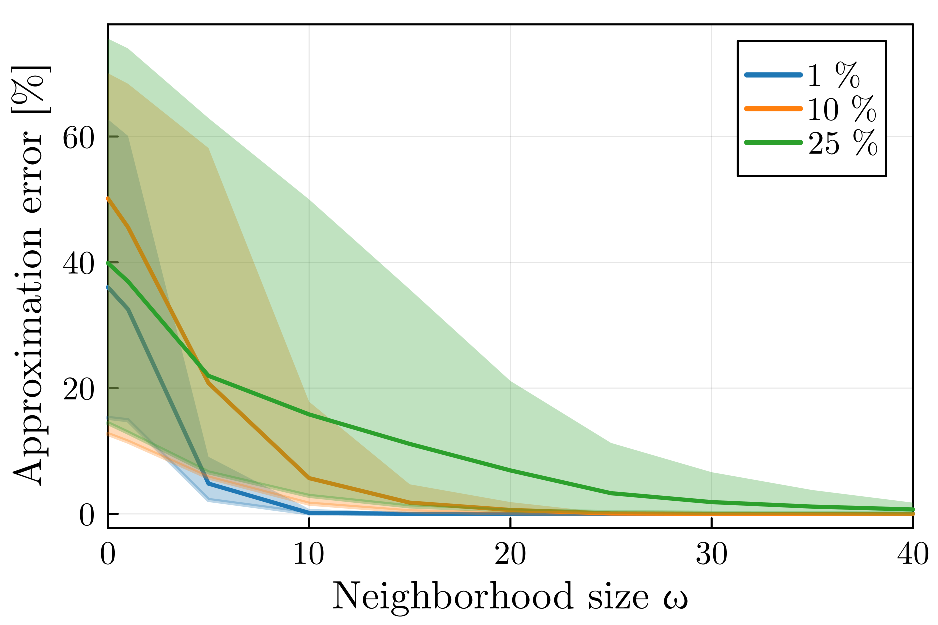}
    \caption{Approximation error of the marginal emissions rates as a function of the horizon parameter $\omega$, under different storage penetration scenarios for a 50-node network.
    Full line: median over all nodes in the network. Ribbons indicate the 10th and 90th percentiles. The approximation error decreases exponentially with $\omega$, at a rate that depends on storage penetration.}
    \label{fig:approx_MEF}
    \vspace{-10pt}
\end{figure}
\section{Conclusion}\label{sec:conclusion}

In this paper, we propose a framework for efficiently differentiating through separable optimization programs. We split the computation of Jacobians between a local component and a correction term resulting from the coupling between subproblems. This approach is computationally efficient, especially if the number of coupling constraints is relatively small. We also provide sufficient conditions for all Jacobians to be well defined. In addition, we generalize the analysis of a recent decentralized algorithm and show that it can be readily applied to compute the coupling term in a fully distributed manner, with theoretical guarantees. Finally, we apply the distributed algorithm to the computation of marginal emissions rates in time-coupled power system models. We observe that the larger the storage penetration the further in time emissions impacts can be propagated, which stresses the importance of incorporating dynamic effects in the computation of marginal emissions rates.


Combined with fully distributed solution methods for the lower-level problems, the approach in this paper opens the door to gradient-based distributed bi-level optimization. This has the potential to accelerate complex bi-level problems like expansion planning~\cite{Degleris2021-vb}. We see the efficient and scalable implementation of such methodologies as well as the study of their convergence properties as exciting avenues for future work.







\vspace{-10pt}
\renewcommand{\baselinestretch}{0.9}
{
\printbibliography
}

\appendix
\section{Proofs}\label{app:proof}
The proofs below are often a result of the structure of the KKT matrix for generic optimization problems. Let us consider problems of the form
\begin{align}
\begin{array}{ll}
      & \min_x f(x, \theta)\\
    \text{s.t. } & g_i(x, \theta) \leq 0,\ i=1, ..., l,\\
    & h_j(x, \theta) = 0, \ j=1,..., m,
\end{array}
\label{pbm:validity_P}
\end{align}
and assume we have a solution $x^\star(\theta_0) \in \mathbbm{R}^n$ with associated dual variables $\lambda^\star(\theta_0) \in \mathbbm{R}^l,\ \nu^\star(\theta_0) \in \mathbbm{R}^m$ for parameter values given by $\theta_0$. For a problem of the form of~\eqref{pbm:validity_P}, we will write the Jacobian of the KKT matrix as
\begin{align}
\label{eq:jac_KKT}
\partial_z G = 
    \scriptsize
    \begin{bmatrix}
        \nabla^2 f & \partial_x h & \partial_x g\\
        \partial_x h & 0 & 0 \\
        \diag(\lambda) \partial_x g & 0 & \diag(g),
    \end{bmatrix}
\end{align}
where $h = [h_1(x, \theta), ..., h_m(x, \theta)]^T$ and $g = [g_1(x, \theta),..., g_l(x, \theta)]^T$.

\subsection{Proof of Theorem~\ref{th:DID}}\label{app:proof_did}
\begin{proof}
 First, if Assumption~\ref{A3} holds for each subproblem, then it holds for the global problem. Let us denote by $\mathcal{Y}_i$ the set of all $y\in \mathbbm{R}^{n_i}$ that satisfy Assumption~\ref{A3} for subproblem $i$. The set $\mathcal{Y}_g$ satisfying the conditions for the global problem is such that $\mathcal{Y}_g \subseteq \mathcal{Y}_1 \times ... \times \mathcal{Y}_N$ because the global problem contains the global coupling constraints in addition to local constraints. Therefore, if that condition is satisfied for every subproblem, it is satisfied $\forall y \in \mathcal{Y}_g$ and consequently holds for the global problem.
    
   Second, if the global problem satisfies Assumptions~\ref{A1}-\ref{A5}, then the Jacobian of the solution is well defined according to Theorem 2.1 in~\cite{Fiacco1976-bt}. 
   
   Third, if each individual subproblem satisfies the second order conditions, then the local Jacobians are all unique and invertible. Indeed, if the first order conditions are satisfied for the global problem, they also are satisfied for all local problems individually and each subproblem then satisfies the conditions of Theorem 2.1 in~\cite{Fiacco1976-bt}. 
   
   Finally, we are left to show that the coupling Jacobian is invertible. To show this, we observe that the Jacobian of the full problem is a block matrix $J_f = \scriptsize \begin{bmatrix}
        A & B^T\\
        B & 0
    \end{bmatrix}$, where $A$ is a block diagonal matrix composed of local Jacobians, and that the coupling Jacobian is the Schur complement $J_f / A$ of $A$ in $J_f$\cite{Boyd2004-vf,Gallier2011-pr}. As $\det(J_f) = \det(A)\det(J_f/A)$, and as $J_f$ and $A$ are invertible, the coupling Jacobian $J_f/A$ is invertible. 
\end{proof}

\subsection{Structure of $\lhs$}\label{proof:banded}
For a problem in the form of Problem~\eqref{pbm:CCOP}, let us write $M_C = \begin{bmatrix}
    \partial_{\mathrm{x}} H^T & \partial_{\mathrm{x}} F^T
\end{bmatrix}^T,$ a matrix containing the gradients of the coupling constraints.
\begin{lemma}\label{lemma:banded} 
Consider the graph $\G_b$ and define the index sets $\mathcal{I} = \{I_i\}_{i \in \V_c}$ and $\mathcal{J} = \{J_i\}_{i \in \V_p}$ that partition the set of constraints $\{1,... \Lambda\}$ and the set of subproblems $\{1,... N\}$, respectively.
If $B_{M_C}$ denotes the bandwidth of $M_C$ induced by $(\G_b, \I, \J)$ and $B_{\lhs}$ the bandwidth of $\lhs$ induced by $(\G_b, \I, \I)$, then $B_{\lhs} \leq 2 B_{M_C}$.
\end{lemma}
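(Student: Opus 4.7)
The plan is to factor $\lhs$ through a product of matrices whose individual bandwidths under $\G_b$ are easy to bound, and then invoke the additivity of graph-induced bandwidth under matrix multiplication. The factorization reveals that $\lhs$ is morally of the form $M_C \cdot (\text{block diagonal in subproblems}) \cdot M_C^T$ plus a diagonal correction, so the claim reduces to a triangle inequality in $\G_b$.

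Concretely, I would start from the expression $\lhs = -[\partial_\mathrm{x} C\,\partial_\nu \mathrm{x},\; \partial_\mathrm{x} C\,\partial_\lambda \mathrm{x} + \partial_\lambda C]$ in~\eqref{eq:coupling_gradients} and bound the bandwidth of each factor. First, $\partial_\mathrm{x} C = [\partial_\mathrm{x} H;\, \diag(\lambda^\star)\partial_\mathrm{x} F]$ shares the sparsity pattern of $M_C$ (the diagonal scaling does not couple distinct $\I$-blocks), so its bandwidth on $(\G_b, \I, \J)$ is at most $B_{M_C}$. Second, because the local KKT system for subproblem $k$ depends on $\nu_c$ only through the term $\nu_c H_k^{(c)}$, the implicit function theorem applied to each subproblem gives $\partial_\nu x_k^{(c)} = -[(\partial_{z_k}G_k)^{-1}]_{x_k,:}\,\partial_\nu G_k^{(c)}$, which is nonzero only where $\partial_{x_k} H^{(c)}$ is nonzero. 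In block form, $\partial_\nu \mathrm{x}$ therefore factors as a block-diagonal matrix over subproblems (bandwidth $0$ on $(\G_b, \J, \J)$) composed with a matrix that inherits the sparsity pattern of $M_C^T$, so its bandwidth on $(\G_b, \J, \I)$ is at most $B_{M_C}$; the same reasoning applies to $\partial_\lambda \mathrm{x}$. Finally, $\partial_\lambda C$ is zero in the equality block and equals $\diag(\sum_i F_i)$ in the inequality block, so it is diagonal and has bandwidth $0$.

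The concluding step is the standard product rule for graph-induced bandwidth: for compatible partitions, $(AB)_{I_i, K_k} = \sum_j A_{I_i, J_j} B_{J_j, K_k}$, and a nonzero entry requires some $j$ with $d_{\G_b}(i,j) \leq B_A$ and $d_{\G_b}(j,k) \leq B_B$, so by the triangle inequality $d_{\G_b}(i,k) \leq B_A + B_B$. Applying this to the factorization gives $B_{\lhs} \leq B_{M_C} + 0 + B_{M_C} = 2 B_{M_C}$, with the $\partial_\lambda C$ summand contributing no additional off-diagonal blocks. The main obstacle I anticipate is the bookkeeping around partitions: verifying carefully that the local-inverse factor implicit in $\partial_\nu \mathrm{x}$ and $\partial_\lambda \mathrm{x}$ is genuinely block diagonal with respect to the subproblem partition $\J$, and that the scalar diagonal factors from $\diag(\lambda^\star)$ and complementary slackness truly do not couple distinct $\I$-blocks. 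Once those observations are in place, the conclusion is a direct triangle-inequality argument in $\G_b$.
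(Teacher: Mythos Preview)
Your proposal is correct and follows essentially the same route as the paper: both arguments factor $\lhs$ as a diagonal scaling of $M_C\,\bar G\,M_C^T$ plus a diagonal correction (with $\bar G$ the block-diagonal matrix of local inverse Jacobians), observe that $\bar G$ has bandwidth $0$ on $(\G_b,\J,\J)$, and then apply additivity of graph-induced bandwidth under multiplication to get $B_{\lhs}\le B_{M_C}+0+B_{M_C}$. The only cosmetic differences are that the paper states the factorization in one line and cites the product rule from~\cite{Shin2022-jo}, whereas you derive the factorization componentwise from the implicit function theorem and prove the triangle-inequality product rule by hand.
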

\begin{proof}
We can rewrite the LHS of Eq.~\eqref{eq:coupling_gradients} as 
{\small
\begin{equation}
    \lhs = 
    \scriptsize
    - \begin{bmatrix}
        I & 0\\
        0 & \diag(\lambda)
    \end{bmatrix}
    M_{C}
    \bar{G} 
    M_{C}^T
    - \begin{bmatrix}
        0 & 0\\
        0 & \diag(F(x))
    \end{bmatrix},
    \label{eq:decomp}
\end{equation}
}
    
where $\bar{G}$ is a block diagonal matrix where the $i$-th block $\bar{G}_{ii}$ is $ (\partial_{z_i}G_i)^{-1}_{1:n_i, 1:n_i}$. 
    The block diagonal matrix $\bar{G}$ has bandwidth 0 induced by the graph $(\G, \mathcal{J}, \mathcal{J})$. Writing $B_{M_C}$ the bandwidth of $M_C$ induced by $(\G, \mathcal{I}, \mathcal{J})$, and 
    applying the bandwidth properties of graph-structured matrices (see Lemma 3.5 in~\cite{Shin2022-jo}), we have that $B_{M_C \bar{G} M_C^T} \leq B_{M_C} + 0 + B_{M_C^T} = 2B_{M_C}$ induced by $(\G, \mathcal{I}, \mathcal{I})$. As the remaining diagonal matrices have bandwidth zero induced by $(\G, \mathcal{I}, \mathcal{I})$, we have the result that $B_{\lhs} \leq 2B_{M_C}$. 
\end{proof}

\subsection{Definition of $\Gamma$}\label{app:Gamma}
Define $\tilde{y} = [\hat{y}_1^T, ..., \hat{y}_N^T]^T$ the vector consisting of stacks of the local estimates of $\hat{y}$.
Define the index map $m(j) \in \R \to \R$ that maps an index $j$ of $\tilde{y}$ to the index of the corresponding coupling constraint $l \in \{1, ... \Lambda\}$. The aggregation matrix $\Gamma$ can be written as 
\begin{equation}
    \Gamma_{i, j} =
    \begin{cases}
    \frac{1}{\delta_{i}} & \text{ if } i = m(j),\\
    0 & \text{ otherwise,}
    \end{cases}  
\end{equation}
where $\delta_i$ denotes the degree of constraint $i$ in $\G_b$. Therefore, each row of $\Gamma$ collects the components of $\tilde{y}$ that correspond to the same constraint and averages them. We note that the rows of $\Gamma$ sum to 1, i.e. $\|\Gamma\|_\infty = 1$.
\subsection{Proof of Lemma~\ref{lemma:Lkw_invertible}}\label{proof:Lkw_invertible}
\begin{proof}
First, note that if the problem is strictly convex, it automatically satisfies Assumption~\ref{A3}.
Then, projecting~\eqref{eq:decomp} onto $\V_k^\omega$, we see that
\begin{align}
    \lhs_k^\omega &= -\diag(\lambda_k^\omega) 
    M_k^\omega 
    - \diag(\kappa_k^\omega),
\end{align}
where $M_k^\omega = (T_k^\omega)^T M_C \bar{G} M_C^T T_k^\omega$,  $\lambda_k^\omega \in \R^{|\V_k^\omega|}$ is the projection of the first diagonal matrix in~\eqref{eq:decomp} on $\V_k^\omega$, and $\kappa_k^\omega\in \R^{|\V_k^\omega|}$ is the projection of the second diagonal matrix in~\eqref{eq:decomp} on the same constraint set.
As $\bar{G}$ is PSD (see Lemma~\ref{lemma:inverse_KKT_PSD} below),  $M_C\bar{G}M_C^T$ is also PSD. 

In addition, $M_C\bar{G}M_C^T$ is invertible. 
The matrix $M_C\bar{G}M_C^T$ is actually the Schur complement of a modified global Jacobian. Indeed, let us construct the matrix
$
\partial\tilde{G} =
\scriptsize
\begin{bmatrix}
    G & B \\ 
    B^T & 0
\end{bmatrix}
$
where $G$ is a block-diagonal matrix with principal blocks being local jacobians $\partial_{z_i}G_i$ and where $B$ is a juxtaposition of $\partial_{x_i} H$ and $\partial_{x_i}F$ at the corresponding indices. We know by Theorem~\ref{th:DID} that the local Jacobians are invertible. Therefore the top left block $G$ is invertible. 
Considering all constraints in the problem are assumed linearly independent, including the coupling constraints, the matrix above is full row and column rank as long as the number of coupling constraints is smaller than the dimensions of $G$. Therefore it is invertible, and its Schur complement is invertible, too.

Therefore, $M_C\bar{G}M_C^T$ is PD, as is any of its principal submatrices~\cite{Horn1990-ea}, implying that $M_k^\omega$ is PD. 
Following a similar argument as in Lemma~\ref{lemma:inverse_KKT_PSD} below, the projection $\lhs_k^\omega$ is similar to an upper triangular block matrix such that
$$
P \lhs_k^\omega P^T = \scriptsize 
\begin{bmatrix}
    \lhs_1 & \lhs_2 \\
    0 & \diag(\lhs_3)
\end{bmatrix}.
$$
The block $\diag(\lhs_3)$ is full rank because all the terms in the diagonal are non-zero. The block $[\lhs_1 \ \lhs_2]$ is also full row-rank because it corresponds to the non-zero rows of $\diag(\lambda_k^\omega) M_k^\omega$, and $M_k^\omega$ is full rank because it is PD. Therefore, the block $\lhs_1$ is full rank. This implies that the matrix $\lhs_k^\omega$ is full rank and therefore invertible.
\end{proof}

\begin{lemma}
\label{lemma:inverse_KKT_PSD}
    Consider an optimization problem of the form~\eqref{pbm:validity_P} and an associated optimal solution $z^\star = (x^\star, \lambda^\star, \nu^\star)$. If the problem is strictly convex and satisfies Assumptions~\ref{A1},~\ref{A2},~\ref{A4} \&~\ref{A5}, then the $n \times n$ block of the inverse of the Jacobian corresponding to the primal variables, i.e.
    $
    (\partial_{z}G)^{-1}_{1:n, 1:n}
    $,
is PSD.
\end{lemma}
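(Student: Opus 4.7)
The plan is to write $(\partial_z G)^{-1}_{1:n,1:n}$ in closed form and recognize it as the conjugation of an orthogonal projector by $H^{-1/2}$, where $H$ denotes the top-left $n\times n$ block of $\partial_z G$. First, I would use strict complementary slackness (Assumption~\ref{A5}) to eliminate the inactive inequality constraints. After reordering so that the inactive multipliers come last, $\partial_z G$ becomes block upper triangular: the rows indexed by inactive $\lambda_i$ carry the factor $\lambda_i^\star = 0$ in their first entries and zeros elsewhere except for the diagonal $g_i(x^\star) \neq 0$. The block-triangular inverse formula then shows that the desired $n\times n$ block equals the corresponding block of $\tilde{J}^{-1}$, where
\begin{equation*}
\tilde{J} = \begin{bmatrix} H & (\partial_x h)^T & (\partial_x g)_{\mathcal{I}}^T \\ \partial_x h & 0 & 0 \\ \Lambda_\mathcal{I} (\partial_x g)_\mathcal{I} & 0 & 0 \end{bmatrix}, \quad \Lambda_\mathcal{I} = \diag(\lambda^\star_\mathcal{I}) \succ 0,
\end{equation*}
and $\mathcal{I} = \{i : g_i(x^\star) = 0\}$ is the active set.

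The matrix $\tilde{J}$ is asymmetric because differentiating complementary slackness introduces the factor $\Lambda_\mathcal{I}$. I would symmetrize by left-multiplying with $D = \diag(I_n, I_m, \Lambda_\mathcal{I}^{-1})$, which is well defined by strict complementarity. The resulting matrix
\begin{equation*}
\tilde{J}' = D\,\tilde{J} = \begin{bmatrix} H & A^T \\ A & 0 \end{bmatrix}, \qquad A = \begin{bmatrix} \partial_x h \\ (\partial_x g)_\mathcal{I} \end{bmatrix},
\end{equation*}
is a symmetric saddle-point matrix. Since $D$ restricts to $I_n$ on its first diagonal block, the identity $\tilde{J}^{-1} = \tilde{J}'^{-1} D$ gives $(\tilde{J}^{-1})_{1:n,1:n} = (\tilde{J}'^{-1})_{1:n,1:n}$, so it suffices to analyze $\tilde{J}'$. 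Strict convexity yields $H \succ 0$ and Assumption~\ref{A4} gives $A$ full row rank, so the standard block-inverse formula for saddle-point systems produces
\begin{equation*}
(\tilde{J}'^{-1})_{1:n,1:n} = H^{-1} - H^{-1} A^T (A H^{-1} A^T)^{-1} A H^{-1} = H^{-1/2}\,\Pi\,H^{-1/2},
\end{equation*}
where $\Pi = I - (AH^{-1/2})^T (AH^{-1}A^T)^{-1} (AH^{-1/2})$ is the orthogonal projector onto $\ker(AH^{-1/2})$. As the conjugation of a PSD projector by the invertible matrix $H^{-1/2}$, this block is PSD, which is the conclusion.

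The main subtlety is bookkeeping rather than a deep technical obstacle: one has to carefully justify the block-triangular reduction using Assumption~\ref{A5}, the invertibility of $\Lambda_\mathcal{I}$ that enables the symmetrization, and the positive-definiteness of $H$. In the standard convex setting (strictly convex $f$, convex $g_i$, affine $h_j$), one obtains $H = \nabla^2 f + \sum_i \lambda_i^\star \nabla^2 g_i \succeq \nabla^2 f \succ 0$ directly. If only the second-order sufficient condition (Assumption~\ref{A3}) is available on the critical cone, an analogous projection argument can still be carried out by restricting to that cone, at the cost of more intricate algebra with pseudoinverses.
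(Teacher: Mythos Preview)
Your proof is correct and takes a genuinely different, more streamlined route than the paper. The paper keeps all inequality constraints in the system, writes the $(1{:}n,1{:}n)$ block of the inverse as $A^{-1}(I+BS^{-1}CA^{-1})$ via the Schur complement of the non-symmetric Jacobian, then separately establishes that the second factor is idempotent (hence has spectrum in $\{0,1\}$) and that the whole product is symmetric by showing $S^{-1}\diag(\tilde\lambda)$ is symmetric after a permutation argument; it finally invokes the fact that a product of two PSD matrices is PSD if and only if it is normal. Your approach instead first strips out the inactive inequalities using the block upper-triangular structure guaranteed by strict complementarity, then left-multiplies by $\diag(I_n,I_m,\Lambda_{\mathcal I}^{-1})$ to symmetrize the reduced system into the classical saddle-point form, so that the $(1{:}n,1{:}n)$ block of the inverse is immediately $H^{-1}-H^{-1}A^T(AH^{-1}A^T)^{-1}AH^{-1}=H^{-1/2}\Pi H^{-1/2}$ with $\Pi$ an orthogonal projector. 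This avoids the idempotence computation, the permutation bookkeeping, and the appeal to the normal-product criterion, at the cost of requiring the active/inactive split up front. Both arguments rest on the same hypotheses ($H=\nabla^2_x L\succ 0$ from strict convexity, $A$ full row rank from Assumption~\ref{A4}, and $\Lambda_{\mathcal I}\succ 0$ from Assumption~\ref{A5}); yours simply exposes the projector structure more directly.
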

\begin{proof}
The Jacobian of the KKT matrix~\eqref{eq:jac_KKT} can be written in block form as
$
\partial_z G = 
\scriptsize
\begin{bmatrix}
    A & B \\
    C & D
\end{bmatrix},
$
where the blocks are 
$A = \nabla^2 L$, 
$B = [\partial_x h\ \partial_xg]$, 
$C = 
\diag(\tilde{\lambda})
B^T$, 
$D = 
\scriptsize
\begin{bmatrix}
    0 & 0\\
    0 & \diag(g)
\end{bmatrix}
$ and where $\tilde{\lambda} = [\underbrace{1, ...,  1}_{m \text{ times}}, \lambda^T]^T$.

As the problem is strictly convex by assumption, $A \succ 0$ and the Jacobian $\partial_z G$ is invertible according to Theorem 2.1 in~\cite{Fiacco1976-bt}. Therefore, the Schur complement of $A$, $S = D - CA^{-1}B$ is invertible and we can write~\cite{Gallier2011-pr}
{\small
\begin{align}
    (\partial_{z}G)^{-1}_{1:n, 1:n}
&= A^{-1} \left(I + B S^{-1} CA^{-1}\right).
\label{eq:inv_KKT_factors}
\end{align}
}%
Note that the rows of $C$ that are identically zero correspond to the non-tight inequality constraints. Assumption~\ref{A5} implies that the corresponding rows of $D$ are nonzero.
Therefore,
there exists a permutation matrix $P$ such that the Schur complement $S$ is similar to a matrix with structure
$
PSP^T = \bar{S} = 
\scriptsize
\begin{bmatrix}
    S_1 & S_2 \\
    0 & \diag(g_0)
\end{bmatrix},
$
with $g_0$ the non-zero elements of $D$.
The top-left block $S_1$ can be written as
$
S_1 = - \tilde{P}CA^{-1}B\tilde{P}^T,
$
where $\tilde{P} = 
\scriptsize\begin{bmatrix}
    I_{m+l_0} & 0
\end{bmatrix} P$ and $l_0$ is the number of non-tight inequality constraints. We know that $\bar{S}$ is invertible and that its inverse will have a similar structure, i.e.
$
\bar{S}^{-1} =
\scriptsize
\begin{bmatrix}
    S_1^{-1} & S_2' \\
    0 & \diag(1/g_0)
\end{bmatrix}.
$

The second factor in~\eqref{eq:inv_KKT_factors} satisfies
$
\left(I + B S^{-1}CA^{-1}\right)^2  = I + 2B S^{-1}CA^{-1}
    + B S^{-1}CA^{-1}BS^{-1}CA^{-1}.
$
Under the same permutation, we have that 
$
PCA^{-1}BP^T
= -
\scriptsize
\begin{bmatrix}
    S_1 & S_2 \\
    0 & 0
\end{bmatrix}. 
$
Therefore,
{\small
\begin{align*}
PCA^{-1}BP^TPS^{-1}P^T = 
-
\scriptsize
\begin{bmatrix}
    I & 0 \\
    0 & 0
\end{bmatrix}. 
\end{align*}
}%
While this is not the identity matrix, the zero rows of this matrix correspond to the zero rows of $C$. This leads to
$
    CA^{-1}BS^{-1}C = -C.
$
We can thus write $\left(I + B S^{-1}CA^{-1}\right)^2 = I + B S^{-1}CA^{-1}$.
This shows that $I + B S^{-1}CA^{-1}$ is idempotent, implying all its eigenvalues are either 0 or 1~\cite{Horn1990-ea}, and therefore that it is PSD. 

The two factors in~\eqref{eq:inv_KKT_factors} are PSD matrices, as is their product if and only if their product is normal~\cite{Meenakshi1999-fq}. This is the case if the product is symmetric. 
Equation~\eqref{eq:inv_KKT_factors} amounts to $A^{-1} + A^{-1}B S^{-1}\diag(\tilde{\lambda})B^TA^{-1},$
which is symmetric if $S^{-1}\diag(\tilde{\lambda})$ is itself symmetric. We write
{\small
\begin{align*}
    S^{-1}\diag(\tilde{\lambda}) & = P^TPS^{-1}P^TP\diag(\tilde{\lambda})P^TP\\
    & = P^T\bar{S}^{-1}\diag(\bar{\tilde{\lambda}})P = \scriptsize \begin{bmatrix}
        S_1^{-1}\diag(\bar{\tilde{\lambda}}_+) &0\\
        0 & 0
    \end{bmatrix},
\end{align*}
}%
where $\bar{\tilde{\lambda}}$ is the extended dual variable $\tilde{\lambda}$ under permutation $P$, and $\bar{\tilde{\lambda}}_+$ is the restriction of $\bar{\tilde{\lambda}}$ to its nonzero elements.
We note that 
{\small
\begin{align*}
S_1^{-1}\diag(\bar{\tilde{\lambda}}_+) &=
-\left(\tilde{P}\diag(\tilde{\lambda})B^TA^{-1}B\tilde{P}^T\right)^{-1}\diag(\bar{\tilde{\lambda}}_+)\\
&=-\left(\diag(\bar{\tilde{\lambda}}_+)\tilde{P}B^TA^{-1}B\tilde{P}^T\right)^{-1}\diag(\bar{\tilde{\lambda}}_+)\\
& = - \left(\tilde{P}B^TA^{-1}B\tilde{P}^T
\right)^{-1}
\diag(\bar{\tilde{\lambda}}_+)^{-1}
\diag(\bar{\tilde{\lambda}}_+)\\
&= - \left(\tilde{P}B^TA^{-1}B\tilde{P}^T
\right)^{-1}.
\end{align*}
}%
The above matrix is symmetric, implying that~\eqref{eq:inv_KKT_factors} is symmetric. As~\eqref{eq:inv_KKT_factors} is also a product of two PSD matrices, then it is itself PSD~\cite{Meenakshi1999-fq}. 
\end{proof}

\subsection{Proof of Theorem~\ref{th:convergence_rate}}\label{proof:convergence_rate}
\begin{proof}
The proof of this result mainly consists of small modifications of proofs in~\cite{Shin2020-jc}.

First, for convergence, we need $\rho(\Gamma S^\omega) < 1$. As the spectral radius $\rho(\cdot)$ satisfies $\rho(\cdot) \leq \|\cdot \|_\infty$ ~\cite{Horn1990-ea} and the $\|\cdot\|_\infty$ is a matrix norm, it is submultiplicative and
$\rho(\Gamma S^\omega) \leq \| \Gamma S^\omega \|_\infty \leq \| \Gamma \|_{\infty} \|S^\omega\|_\infty \leq \| S \|_{\infty}$ as $\| \Gamma \|_{\infty} = 1$. Therefore, the same reasoning as in~\cite{Shin2020-jc} can be applied to our setting. Then, the proof of Lemma 3 in~\cite{Shin2020-jc} can be readily adapted with the result of Theorem 3.6 in~\cite{Shin2022-jo}. Indeed, the proof remains unchanged in spirit as long as the projections $\lhs_k^\omega$ are invertible, which we have proved in Lemma~\ref{lemma:Lkw_invertible}. Equality (48) in~\cite{Shin2020-jc} only needs to be replaced with the bound of Theorem 3.6 in~\cite{Shin2022-jo}.

\end{proof}

\end{document}